\newtheorem*{lemma}{Lemma}
\newtheorem*{proposition}{Proposition}
\newtheorem*{thm}{Theorem}
\newtheorem*{corollary}{Corollary}
\theoremstyle{remark}
   \newtheorem*{example}{Example}
\theoremstyle{definition}
    \newtheorem*{definition}{Definition}
    \newcommand{\Nat}   {\mathbb{N}}            \newcommand{\Ent}   {\mathbb{Z}}
    \newcommand{\set}[1]{\left\{#1\right\}}     
    \newcommand{\cat}[1]{\mathscr{#1}}				\renewcommand{\cal}[1]{\cat{#1}}
    		\newcommand{\FM} {\cat{F}\!\!_M}
    \newcommand{\sm}[1] {\begin{smallmatrix}#1\end{smallmatrix}}
    \newcommand{\kk}    {\ensuremath{\mathbf{k}}}
    					\newcommand{\eps}   {\varepsilon}
    \newcommand{\To}    {\longrightarrow}
  \renewcommand{\mod}	{\operatorname{mod}}
    \newcommand{\Hom}   {\operatorname{Hom}}    \newcommand{\End}   {\operatorname{End}}
        \newcommand{\rad}   {\operatorname{rad}}
    \newcommand{\ind}   {\operatorname{ind}}    \newcommand{\add}   {\operatorname{add}}
    \newcommand{\Ker}   {\operatorname{Ker}}    
    \newcommand{\Ima}   {\operatorname{Im}}
    \newcommand{\Ext}   {\operatorname{Ext}}    
    \newcommand{\D}     {\operatorname{D}}		
    \newcommand{\Tr}    {{\rm Tr}\,}				\newcommand{\Ann}   {\operatorname{Ann}}
    \newcommand{\projd} {{\rm pd}\,}            \newcommand{\injd}  {{\rm id}\,}
    \newcommand{\gldim} {\operatorname{gl.dim.}}\newcommand{\repdim}{\operatorname{rep.dim.}}
  \renewcommand{\le}		{\leqslant}				 \renewcommand{\ge}		{\geqslant}
    \renewcommand{\vec}[1]   {\protect\vv{#1}}   
    \newcommand{\ivec}[1]   {\text{\reflectbox{\ensuremath{\protect\vv{\text{\reflectbox{$#1$}}}}}}}   
    \newcommand{\lra}{\To}
        \def\theequation{\thesection.\@arabic\c@equation}
        \def\dotsd{\,\mbox{}\mathinner{\mkern1mu\raise17\p@
            \vbox{\kern13\p@\hbox{.}}\mkern2mu
            \raise14\p@\hbox{.}\mkern2mu\raise11\p@\hbox{.}\mkern1mu}\mbox{}\,}
        \def\dotst{\,\mathinner{\mkern1mu\raise11\p@
            \vbox{\kern13\p@\hbox{.}}\mkern2mu
            \raise14\p@\hbox{.}\mkern2mu\raise17\p@\hbox{.}\mkern1mu}\,}
\begin{document}

\title{The Representation Dimension of a Selfinjective Algebra of Euclidean Type}
\author{Ibrahim Assem}
    \address{D\'epartement de math\'ematiques, Facult\'e des sciences, Universit\'e de Sherbrooke,
            Sherbrooke, Qu\'ebec J1K 2R1, Canada.}
\author{Andrzej Skowro\'nski}
    \address{Faculty of Mathematics and Computer Science, Nicolaus Copernicus University,
           Chopina, 12/18 87-100 Toru\'n, Poland.}
\author{Sonia Trepode}
    \address{Departamento de Matem\'atica, Facultad de Ciencias Exactas y Naturales, Funes 3350,
            Universidad Nacional de Mar del Plata, 7600 Mar del Plata, Argentina.}

\begin{keyword}
   representation dimension \sep selfinjective algebras \sep tilted algebras \sep euclidean type \sep tame algebras
   \MSC 16G60 \sep 18G20 \sep 16G70 \sep 16G20
\end{keyword}

\begin{abstract}
    We prove that the representation dimension of a selfinjective algebra of
    euclidean type is equal to three, and give an explicit construction of the Auslander generator of its module category.
\end{abstract}

\maketitle

\section{Introduction}

The homological dimensions are useful algebraic invariants,
measuring how much an algebra or a module deviates from a situation considered to be ideal.
The representation dimension of an Artin algebra was introduced by Auslander in the early seventies~\cite{A}.
It measures the least global dimension of the endomorphism rings of modules which are at the same time
	generators and cogenerators of the module category,
   thus expressing the complexity of the morphisms in this category.
Part of the interest in this invariant comes from its relation with the finitistic dimension conjecture:
   it was proved by Igusa and Todorov that, if the representation dimension is at most three,
   then its finitistic dimension is finite~\cite{IT}.
Iyama has shown that the representation dimension of any algebra is finite~\cite{I}
   and Rouquier has shown that, for any positive integer $n$,
      there exists an algebra having $n$ as representation dimension~\cite{Ro}.
There were several attempts to understand this invariant and to compute it for classes of algebras,
   see for instance~\cite{APT,ACW,CHU,D,O,Ri1}.
In particular, it was shown in~\cite{CP} that
   the representation dimension of the trivial extension of an (iterated) tilted algebra equals three,
   and in~\cite{EHIS} that
   the representation dimension of a special biserial algebra is at most three.

In the present paper, we are interested in tame algebras.
It was shown by Auslander that an algebra is representation-finite if and only if
   its representation dimension equals two~\cite{A}.
Because Auslander's expectation was that this invariant would measure
   how far an algebra is from being representation-finite,
   it is natural to conjecture that the representation dimension of a tame algebra is at most three.

Among the best known and most studied classes of tame algebras are the tame selfinjective algebras, see~\cite{S2}.
In fact, it is shown in~\cite{BHS} that the selfinjective algebras
   socle equivalent to weakly symmetric algebras of euclidean type have representation dimension three.
Our objective here is to determine the representation dimension of
   selfinjective algebras of euclidean type (over an algebraically closed field).
We recall that, if $\vec\Delta$ is an euclidean quiver,
   then an algebra $A$ is called selfinjective of type $\vec\Delta$ whenever
   there exists a tilted algebra $B$ of type $\vec\Delta$
   and an infinite cyclic admissible group $G$ of automorphisms of the repetitive category $\hat B$ of $B$ such that $A\cong\hat B/G$.
Because of the main result of~\cite{S1}, this class of algebras coincides with
   the class of representation infinite domestic selfinjective algebras which admit simply connected Galois coverings,
   in the sense of~\cite{AS2}.
We prove the following theorem.
\begin{thm}
    Let $A$ be a selfinjective algebra of euclidean type.
    Then the representation dimension of $A$ is equal to three.
\end{thm}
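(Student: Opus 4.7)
The lower bound $\repdim A \ge 3$ is immediate: every selfinjective algebra of euclidean type is representation infinite, so Auslander's characterisation~\cite{A} rules out $\repdim A = 2$. The entire work is therefore to construct a module $M$ that is simultaneously a generator and cogenerator of $\mod A$ and such that $\gldim \End_A(M) \le 3$; equivalently, every indecomposable $X \in \mod A$ should admit a short exact sequence
$$0 \to M_1 \to M_0 \to X \to 0$$
with $M_0, M_1 \in \add M$ and the induced map $\Hom_A(M, M_0) \to \Hom_A(M, X)$ surjective.

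My plan is to lift the construction through the Galois covering $F \colon \hat B \to A$ provided by $A \cong \hat B / G$. The push-down functor $F_\lambda \colon \mod \hat B \to \mod A$ is exact and sends almost split sequences to almost split sequences, so a $G$-invariant family of indecomposable $\hat B$-modules that resolves $\mod \hat B$ in the Auslander sense descends to a finite-dimensional Auslander generator on $A$. Since $B$ is tilted of type $\vec\Delta$, the components of $\mod \hat B$ consist of $\mathbb{Z}$-translates of the connecting component of $\mod B$ together with a $\mathbb{P}^1(\kk)$-family of standard stable tubes; after push-down, the AR-quiver of $\mod A$ has two acyclic components of shape $\mathbb{Z}\vec\Delta / \langle \varphi \rangle$, containing the indecomposable projective-injective modules, along with the same tubular family. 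Guided by this, I would take $M = A \oplus DA \oplus N$, where $N$ is the $F_\lambda$-image of a finite $G$-periodic family of indecomposables: a complete section in each acyclic component of $\mod \hat B$ and, in each exceptional tube of rank $r$, all indecomposables of quasi-length $\le r$. The standard mesh sequences inside a tube of rank $r$ and the minimal projective resolutions in the acyclic components then yield the required $M$-resolutions for every indecomposable lying outside the homogeneous tubes.

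The main obstacle is the $\mathbb{P}^1(\kk)$-parametrised family of homogeneous tubes: $N$ cannot contain a representative from every such tube, yet every indecomposable in each of them must still admit an $M$-resolution satisfying the Auslander condition. To overcome this, I would exploit that $B$ is derived equivalent via a tilting module to a tame hereditary algebra $H$ of type $\vec\Delta$; the known Auslander generator construction for $H$ produces a family of short exact sequences that is uniform across all regular tubes and resolves the regular modules simultaneously. Transporting these sequences through $D^b(\mod B) \cong D^b(\mod H)$, then along $F_\lambda$, and checking that $G$-equivariance preserves $\Hom_A(M,-)$-exactness, should complete the argument in the spirit of the techniques used in~\cite{BHS,CP,EHIS}.
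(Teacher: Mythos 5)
Your skeleton --- pass to $\hat B$ via the Galois covering and push a $G$-periodic family of modules down through $F_\lambda$ --- is indeed the same as the paper's, but the structural claims you base it on are inaccurate and, more importantly, the central difficulty is not actually resolved. The Auslander--Reiten quiver of $\hat B$ is not ``the $\Ent$-translates of the connecting component together with one $\mathbb{P}_1(\kk)$-family of stable tubes'': it is $\bigvee_{q\in\Ent}(\cat X_q\vee\cat C_q)$ with \emph{infinitely many} $\mathbb{P}_1(\kk)$-families $\cat C_q$ of \emph{quasitubes} (they contain the projective-injective $\hat B$-modules), alternating with acyclic components $\cat X_q$; correspondingly, the number of acyclic components and tubular families of $\Gamma(\mod A)$ is the period $m$ of the generator of $G$, not two (compare $A_1=\hat B/(\varphi)$ with $\varphi^2=\nu_{\hat B}$, which is one-parametric). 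You also silently assume that every indecomposable $A$-module is a push-down; this requires density of $F_\lambda$, which rests on local support-finiteness of $\hat B$ (a theorem in its own right), and the descent of the approximation property requires the covering isomorphism $\Hom_A(F_\lambda M,F_\lambda X)\cong\bigoplus_{r\in\Ent}\Hom_{\hat B}({}^{g^r}\!M,X)$ together with vanishing and factorisation arguments using the separation properties of the $\cat X_q$ and $\cat C_q$ --- the step you describe as ``checking that $G$-equivariance preserves $\Hom_A(M,-)$-exactness'' is precisely this argument and is not supplied.

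The decisive gap is your treatment of the homogeneous tubes. Transporting the Auslander generator of a tame hereditary algebra $H$ through $D^b(\mod B)\simeq D^b(\mod H)$ and then along $F_\lambda$ does not work: a derived equivalence sends modules to complexes, not modules, and does not send short exact sequences to short exact sequences, so the resolutions of the regular $H$-modules do not become $B$-module resolutions (nor is representation dimension known to transfer along derived equivalence for non-selfinjective algebras); and in any case the modules that must be resolved are $\hat B$-modules lying in the families $\cat C_q$, whose supports are the domestic quasitube algebras $B_q$, not $B$, let alone $H$. The paper closes exactly this gap by a module-theoretic device: for a domestic quasitube algebra with left support $A^-$ and slice modules $T^\pm$, every indecomposable $X$ with nonzero restriction $Y$ to $A^-$ acquires an approximating sequence in $\add(A\oplus\D A\oplus T^-\oplus T^+\oplus\D A^-)$ by splicing the minimal $\add(A^-\oplus T^-\oplus\D A^-)$-approximation of $Y$ with a projective cover of $X/Y$ and proving that the resulting kernel splits (Proposition 3.3 and Theorem 3.4); this handles all tubes, homogeneous and exceptional alike, uniformly and needs no tube summands in the generator beyond the projective-injectives and the injectives of the $B_q^-$, whereas your proposed $M$ (sections plus small quasi-length modules in exceptional tubes) has no mechanism at all for resolving the modules in the homogeneous tubes. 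Without this ingredient, or a genuine substitute for it, the proof does not go through.
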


Our strategy is the following.
We start by considering a class of algebras which we call domestic quasitube algebras,
   and prove that their representation dimension equals three.
These algebras are considered as building blocks for the
   repetitive category of a tilted algebra of euclidean type,
   which we obtain from them (and from the tilted algebra) by successive gluings.
Therefore, we show how to glue domestic quasitube algebras
   between them or to tilted algebras of euclidean type
   and prove that this does not change the representation dimension.
Finally, using Galois coverings, we prove our main theorem.
At each step, our proof is constructive: we give explicitly the generator-cogenerator
   of the module category for which the representation dimension is attained.

The paper is organised as follows.
After an introductory section devoted to fixing the notation
   and recalling useful facts about the representation dimension,
   section 3 is devoted to domestic quasitube algebras and
   section 4 to the gluings of such algebras.
We next recall in section 5 the necessary facts about the selfinjective algebras of euclidean type,
then prove our main theorem in section 6.

\section{The Representation Dimension}

\subsection{Notation}

Throughout this paper, $\kk$ denotes an algebraically closed field.
By algebra $A$ is meant a basic, connected, associative finite dimensional $\kk$-algebra with an identity.
Thus, there exists a connected bound quiver $(Q_A,I)$ and an isomorphism $A \cong \kk Q_A/ I$.
Equivalently, $A$ may be considered as a $\kk$-category with object class $A_0$ the set of points in $Q_A$,
and with set of morphisms $A(x,y)$ from $x$ to $y$ the quotient of the $\kk$-vector space $\kk Q_A(x,y)$ of linear combinations of paths in $Q_A$ from $x$ to $y$ by $I(x,y) = I \cap \kk Q_A (x,y)$, see~\cite{BG}.
A full subcategory $C$ of $A$ is \emph{convex} if,
    for each $\xymatrix@1@C=12pt{x_0 \ar[r]& x_1\ar[r]& \cdots \ar[r]& x_n}$
    in $A$ with $x_0, x_n \in C_0$,
    we have $x_i \in C_0$ for each $i$.
The algebra $A$ is \emph{triangular} if $Q_A$ is acyclic.

Here, $A$-modules will mean finitely generated right $A$-modules.
We denote by $\mod A$ the category of $A$-modules and
   by $\ind A$ a full subcategory consisting of a complete set of representatives of
   the isomorphism classes (isoclasses) of indecomposable $A$-modules.
For a point $x$ in  $Q_A$, we denote by $P(x)$ (or $I(x)$, or $S(x)$) the indecomposable projective
(or injective, or simple, respectively) $A$-module corresponding to $x$.
The projective (or injective) dimension of a module $M$ will be denoted by $\projd M$ (or $\injd M$, respectively)
and the global dimension of $A$ by $\gldim A$.
For a module $M$, the notation $\add M$ stands for the additive full subcategory of $\mod A$ with objects the direct sums of direct summands of $M$.
For two full subcategories $\cal{C}, \cal{D}$ of $\ind A$, the notation $\Hom_A(\cal{C}, \cal{D}) = 0$ means $\Hom_A(M,N) = 0$ for all $M$ in $\cal{C}$, $N$ in $\cal{D}$.
We then denote by $\cal D \vee \cal C$ the full subcategory of $\ind A$ having as objects
those of $\cal C_0 \cup \cal D_0$.
Finally, we denote by $\D = \Hom_{\kk}(-,\kk)$ the usual duality between $\mod A$ and $\mod A^{\rm{op}}$.

A \emph{path} in $\ind A$ from $M$ to $N$ is a sequence of non-zero morphisms
   \[\xymatrix@C=18pt{M = M_0 \ar[r]& M_1 \ar[r]&	\cdots	\ar[r]& M_t = N} \tag{*}\]
with all $M_i$ indecomposable.
We then say that $M$ is a \emph{predecessor} of $N$, or that $N$ is a \emph{successor} of $M$.

We use freely properties of the Auslander-Reiten translations $\tau_A = \D\Tr$ and $\tau^{-1}_A = \Tr\D$ and the Auslander-Reiten quiver $\Gamma(\mod A)$ of $A$ for which we refer to \cite{ARS,ASS}.
We identify points in $\Gamma(\mod A)$ with the corresponding $A$-modules and (parts of) components in $\Gamma(\mod A)$ with the corresponding full subcategories of $\ind A$.
For tubes, tubular extensions and coextensions, we refer the reader to~\cite{Ri},
and for tame algebras we refer to~\cite{S0,SS1,SS2}.

\subsection{Representation dimension}

The notion of representation dimension was introduced in~\cite{A} to which we refer for the original definition.
Hence, we use as definition the following characterisation, also from~\cite{A}.

\begin{definition}
    Let $A$ be a non-semisimple algebra. Its \emph{representation dimension} $\repdim A$ is the infimum of
    the global dimensions of the algebras $\End M$, where the module $M$ is at the same time a generator
    and a cogenerator of $\mod A$.
\end{definition}

Note that, if $M$ is a generator and a cogenerator of $\mod A$,
then it can be written as $M=A\oplus \D A\oplus M'$, where $M'$ is an $A$-module.
If $M$ is such a module and moreover $\repdim A =\gldim\End M$,
   then $M$ is called an \emph{Auslander generator} of $\mod A$.

For studying the representation dimension, it is convenient to use a functorial language.
A contravariant functor $F:(\add M)^{\rm op}\To Ab$ is called \emph{finitely presented}, or \emph{coherent}, if
   there exists a morphism $f : M_1 \to M_0$, with $M_1$, $M_0$ in $\add M$, which induces an exact sequence of functors
    \[\xymatrix{\Hom_A(-,M_1)\ar[rr]^{\Hom_A(-,f)}&& \Hom_A(-,M_0)\ar[r]& F\ar[r]&0.}\]
It is shown in~\cite{A} that the category $\FM$ of finitely presented functors from $(\add M)^{\rm op}$ to $Ab$
   is equivalent to $\mod \End M$ and, in particular, is abelian.

In this paper, we are particularly interested in algebras of representation dimension 3.
We recall that an algebra $A$ is representation-finite if and only if $\repdim A=2$, see~\cite{A}.
Therefore, if $A$ is representation-infinite, then $\repdim A  \ge 3$.
We have the following well-known characterisation of algebras with representation dimension 3, see~\cite{A,CP,EHIS,Xi1}.

\begin{lemma}
    Let $M$ be an $A$-module which is a generator and a cogenerator of $\mod A$.
    Then $\gldim(\End M)\le 3$ if and only if, for each $A$-module $X$,
    there exists a short exact sequence \[\xymatrix{ 0 \ar[r]& M_1 \ar[r]& M_0 \ar[r]& X \ar[r]& 0}\]
    with $M_0$, $M_1 \in \add M$, such that the induced sequence of functors
        \[\xymatrix{0\ar[r]& \Hom_A(-,M_1) \ar[r]& \Hom_A(-,M_0)\ar[r]& \Hom_A(-,X) \ar[r]& 0}\]
    is exact in $\FM$.
    In this case, $\repdim A \le 3$.\qed
\end{lemma}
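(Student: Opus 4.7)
The plan is to reformulate the condition as a projective-dimension bound in $\FM$, and then verify the resulting equivalence. Throughout, I exploit the standard equivalence $\FM\cong\mod\End M$ under which the representable functors $\Hom_A(-,N)$ with $N\in\add M$ correspond to the projective $\End M$-modules, so that $\gldim\End M=\gldim\FM$. The key preliminary observation is that, for $X\in\mod A$, the existence of the SES described in the lemma is equivalent to $\projd_{\FM}\Hom_A(-,X)\le 1$: one direction is clear, and conversely, given any length-one projective resolution $0\to\Hom_A(-,M_1)\xrightarrow{\phi}\Hom_A(-,M_0)\to\Hom_A(-,X)\to 0$, Yoneda presents $\phi$ as $\Hom_A(-,g)$ for some $g\colon M_1\to M_0$ in $\add M$; the identity $\ker\Hom_A(-,g)=\Hom_A(-,\ker g)|_{\add M}$ together with $A\in\add M$ forces $g$ to be injective in $\mod A$, and evaluation of the functor SES at $A$ yields the desired SES. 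The lemma thus reduces to: $\gldim\FM\le 3$ iff $\projd_{\FM}\Hom_A(-,X)\le 1$ for every $X\in\mod A$.

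For the implication $(\Leftarrow)$, I take any $F\in\FM$ with presentation $\Hom_A(-,N_1)\xrightarrow{\Hom_A(-,f)}\Hom_A(-,N_0)\to F\to 0$, $f\colon N_1\to N_0$ in $\add M$. By the same Yoneda computation, $\ker\Hom_A(-,f)=\Hom_A(-,\ker f)|_{\add M}\in\FM$. Applying the hypothesis to $\ker f\in\mod A$ yields an SES whose induced functor sequence splices with the presentation of $F$ to give a projective resolution of $F$ of length at most three; hence $\gldim\FM\le 3$.

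For $(\Rightarrow)$, the cogenerator hypothesis enters crucially. Given $X\in\mod A$, I embed $X$ in its injective envelope $I_0\in\add\D A\subseteq\add M$ and set $X_1=I_0/X$. Since $I_0$ is $A$-injective, the $\Ext$ long exact sequence assembles into a four-term exact sequence in $\FM$,
\[
0\to\Hom_A(-,X)\to\Hom_A(-,I_0)\to\Hom_A(-,X_1)\to\Ext^1_A(-,X)|_{\add M}\to 0,
\]
with $\Hom_A(-,I_0)$ projective. Splitting this at $G_0=\ima(\Hom_A(-,I_0)\to\Hom_A(-,X_1))$ into two short exact sequences and repeating the construction on $X_1,X_2,\ldots$, a dimension-shifting argument yields the recursion $d_i\le\max(d_{i+1}-1,1)$, where $d_i=\projd_{\FM}\Hom_A(-,X_i)$. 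Combined with the uniform bound $d_n\le\gldim\FM\le 3$, iterating the recursion twice gives $d_0\le 1$; the preliminary observation then produces the required SES, and $\repdim A\le 3$ follows from the definition.

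The main obstacle is the $(\Rightarrow)$ direction: the trivial bound $\projd_{\FM}\Hom_A(-,X)\le 3$ coming directly from $\gldim\FM\le 3$ is two too weak, and sharpening it to $\le 1$ genuinely requires the cogenerator-supplied four-term sequence above (a three-term sequence would not suffice, since $\Hom_A(-,I_0)\to\Hom_A(-,X_1)$ is generally not surjective).
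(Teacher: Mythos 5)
Your proof is correct. There is, however, no internal proof in the paper to compare it with: the lemma is stated with a \qed{} and attributed to the literature (Auslander, Coelho--Platzeck, Erdmann--Holm--Iyama--Schr\"oer, Xi), and your argument is essentially the standard one from those sources. The reduction of the stated condition to $\projd_{\FM}\Hom_A(-,X)\le 1$ (via Yoneda, the identity $\Ker\Hom_A(-,g)\cong\Hom_A(-,\Ker g)|_{\add M}$, and evaluation at $A\in\add M$), and the splicing of a length-one resolution of $\Hom_A(-,\Ker f)$ onto a projective presentation of $F$ in the ``if'' direction, are exactly the classical steps. In the ``only if'' direction the classical route is slightly shorter than yours: from an injective copresentation $0\to X\to I^0\to I^1$ with $I^0,I^1\in\add \D A\subseteq\add M$ one gets the exact sequence $0\to\Hom_A(-,X)\to\Hom_A(-,I^0)\to\Hom_A(-,I^1)\to F\to 0$ in $\FM$, so that $\Hom_A(-,X)$ is a second syzygy of $F$ and $\projd_{\FM}F\le 3$ yields $\projd_{\FM}\Hom_A(-,X)\le 1$ in a single dimension shift; your cosyzygy recursion $d_i\le\max(d_{i+1}-1,1)$ reaches the same bound in two steps, at the cost of handling the functors $\Ext^1_A(-,X_i)|_{\add M}$. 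One small presentational point: the recursion itself, not just the starting value $d_2\le 3$, uses the hypothesis $\gldim\FM\le 3$, since its derivation needs $\projd_{\FM}\Ext^1_A(-,X_i)|_{\add M}\le 3$ (this functor does lie in $\FM$, being a cokernel computed in the abelian category $\FM$); the hypothesis supplies this, so the argument stands, but it would be worth making that dependence explicit.
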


In the situation of the lemma, not only does $M$ generate $X$, but also $\Hom_A(-,M)$ generates $\Hom_A(-,X)$.
This leads to consider the case where the morphism $\Hom_A(-,M_0) \to \Hom_A(-,X)$ is a projective cover.
It was proved in \cite{APT}(1.4) that if $X$ is generated by $M$,
   then there exists an epimorphism $f_0: M_0 \to X$, where $M_0 \in \add M$,
   such that $\Hom_A(-,f_0): \Hom_A(-,M_0) \to \Hom(-,X)$ is a projective cover.

Accordingly, a short exact sequence $\xymatrix@1@C=12pt{0 \ar[r]& M_1 \ar[r]& M_0 \ar[r]^{f_0}& X \ar[r]& 0}$
    such that $\Hom_A(-,f_0): \Hom_A(_,M_0) \to \Hom(-,X)$
    is a projective cover, is called an \emph{$\FM$-resolution} of $X$.

In this terminology, the previous lemma says that, if $M$ is a generator-cogenerator of $\mod A$,
then $\gldim(\End M)\le 3$ if and only if each $A$-module admits an $\FM$-resolution.

\subsection{Approximations}

An equivalent language is useful.
Let $M$ be any $A$-module. Given an $A$-module $X$,
a morphism $f_0: M_0 \lra X$  with $M_0 \in  \operatorname{add }M$ is an \emph{$\add M$-approximation} if,
   for any morphism $f_1: M_1 \to X$ there exists $g: M_1 \to M_0$ such that $f_1 = f_0g$:
   \[\xymatrix@R=6pt{
                     M_1\ar@{.>}[dd]_g
                        \ar[drr]^{f_1}	&&	  \\
                                       &&	X~.\\
                     M_0\ar[urr]^{f_0} &&   }
   \]

Equivalently, $f_0: M_0 \To X$  is an $\add M$-approximation if and only if
   $\Hom_A(-,f_0): \Hom_A(-,M_0) \To \Hom(-,X)$ is surjective in $\add M$.

Note that, if $X$ is generated by $M$, then any $\add M$-approximation $f_0: M_0 \to X$ of $X$ is surjective.
Indeed, let $f_1 : M_1 \to X$ with $M_1\in \add M$ be surjective.
Then there exists $g : M_1 \to M_0$ such that $f_1=f_0g$.
The surjectivity of $f_1$ implies that of $f_0$.

An $\add M$-approximation is \emph{(right) minimal} if each morphism $g: M_0 \to M_0$ such that $f_0g= f_0$ is an isomorphism.
Because of \cite{ARS}(I.2.1), if there exists an $\add M$-approximation,
   then there exists an $\add M$-approximation which is minimal and is then called a \emph{minimal $\add M$-approximation}.

A short exact sequence
    \[\xymatrix{ 0\ar[r]	&	M_1\ar[r]	&	M_0\ar[r]^{f_0}	&	X\ar[r]	&	0	}\]
with $M_1$, $M_0\in\add M$ is an \emph{$\add M$-approximating sequence} if $f_0:M_0\to X$ is an $\add M$-approximation of $X$.
It is a \emph{minimal $\add M$-approximating sequence} if moreover $f_0$ is minimal.
We need~\cite{ACW}(1.7) which we reprove here because it is central to our considerations.

\begin{lemma}
    Let $M, X$ be $A$-modules. If there exists an $\add M$-approximating sequence of $X$,
    then there exists a minimal $\add M$-approximating sequence which is moreover
    a direct summand of any $\add M$-approximating sequence.
\end{lemma}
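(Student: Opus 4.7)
The plan is to bootstrap the minimal approximating sequence from the given one by splitting off a trivial direct summand from $M_0$, and then compare any two approximating sequences via the essential uniqueness of minimal right approximations.

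First I would invoke the cited statement \cite{ARS}(I.2.1): any $\add M$-approximation $f_0\colon M_0\to X$ admits a decomposition $M_0=\overline{M}_0\oplus M_0'$ such that $\overline{f}_0=f_0|_{\overline{M}_0}\colon\overline M_0\to X$ is a minimal $\add M$-approximation and $f_0|_{M_0'}=0$. Since $X$ is generated by $M$ (by the existence of $f_0$), we already noted that $\overline f_0$, being an $\add M$-approximation of $X$, is surjective. Let $\overline M_1=\Ker \overline f_0$, yielding the short exact sequence
\[\xymatrix{0\ar[r]& \overline M_1\ar[r]& \overline M_0\ar[r]^{\overline f_0}& X\ar[r]& 0.}\]
A routine diagram chase gives $M_1\cong \overline M_1\oplus M_0'$. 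Since $M_1\in\add M$ and $\add M$ is closed under direct summands, the Krull--Schmidt theorem forces $\overline M_1\in\add M$; hence the above sequence is a minimal $\add M$-approximating sequence of $X$, and it sits inside the original one as a direct summand with complement the split sequence $\xymatrix@1@C=12pt{0\ar[r]& M_0'\ar@{=}[r]& M_0'\ar[r]& 0\ar[r]& 0}$.

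It then remains to see that this minimal sequence is a direct summand of \emph{every} $\add M$-approximating sequence of $X$, not only of the one we started with. Given another such sequence $\xymatrix@1@C=12pt{0\ar[r]& N_1\ar[r]& N_0\ar[r]^{h_0}& X\ar[r]& 0}$, apply the same procedure to produce a minimal $\add M$-approximating sequence as a direct summand. The standard uniqueness of minimal right approximations up to isomorphism identifies the minimal part $\overline N_0\to X$ with $\overline M_0\to X$; passing to kernels, the two minimal approximating sequences are isomorphic. Consequently the minimal sequence built from $(f_0)$ embeds as a direct summand of the sequence built from $(h_0)$.

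The main technical point, and the only place where the argument is not completely formal, is the verification that $\overline M_1\in\add M$; this is handled by the Krull--Schmidt argument above and is precisely what makes the splitting of $M_0$ lift to a splitting of the entire short exact sequence. Everything else is a repeated use of the universal property defining $\add M$-approximations and of the essential uniqueness of their minimal versions.
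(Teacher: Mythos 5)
Your argument is correct, but it is organised differently from the paper's. The paper takes an arbitrary $\add M$-approximating sequence, invokes the existence of a minimal $\add M$-approximation $f_0'\colon M_0'\to X$ abstractly, and then compares the two short exact sequences by lifting maps in both directions; right minimality of $f_0'$ makes the round trip $v'v$ an automorphism, so both comparison maps are sections, which proves in a single stroke that the minimal sequence is a summand of the arbitrary one (and, implicitly, that its kernel lies in $\add M$, being a summand of $M_1$). You instead use the other half of the ARS machinery, namely the decomposition of a morphism into a right minimal part plus a zero part, so that the minimal approximation $\overline{f}_0$ and the split trivial piece on $M_0'$ are literally visible inside the given sequence; the kernel then splits as $M_1\cong\overline{M}_1\oplus M_0'$ and closure of $\add M$ under summands gives $\overline{M}_1\in\add M$. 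The price of this route is that the ``summand of \emph{any} approximating sequence'' claim needs the additional input of uniqueness of minimal right approximations up to isomorphism, which the paper's comparison diagram makes unnecessary; the gain is that the complementary summand is completely explicit. Two small points to tidy up: the decomposition you use is the right-minimal-plus-zero statement (cited in the paper as \cite{ARS}(I.2.2)) rather than (I.2.1), and you should record the one-line check that the restriction $\overline{f}_0$ of $f_0$ to $\overline{M}_0$ is still an $\add M$-approximation (any $f_1=f_0g$ equals $\overline{f}_0g_1$ where $g_1$ is the component of $g$ landing in $\overline{M}_0$). Neither affects the validity of the proof.
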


\begin{proof}
    Let $M$, $X$ be $A$-modules and $\xymatrix@1@C=25pt{0 \ar[r]&M_1 \ar[r]& M_0 \ar[r]^{f_0}& X \ar[r]& 0}$ be an $\add M$-approximating sequence. Because of~\cite{ARS}(I.2.1), there exists a minimal $\add M$-approximation $f_0':M_0'\to X$.
    Also, $f_0'$ is surjective as observed above.
    Let $M_1'=\Ker f_0'$. Because each of $f_0$, $f_0'$ is an $\add M$-approximation of $M$,
    we get a commutative diagram with exact rows
    \[\xymatrix{
       0\ar[r]&	M_1' \ar[r]\ar[d]_u 	 & M_0'\ar[r]^{f_0'}\ar[d]_v   & X\ar@{=}[d]\ar[r]&0	\\
       0\ar[r]&	M_1 \ar[r]\ar[d]_{u'} & M_0 \ar[r]^{f_0}\ar[d]_{v'} & X\ar@{=}[d]\ar[r]&0	\\
       0\ar[r]&	M_1' \ar[r]			 	 & M_0'\ar[r]^{f_0'}			    & X 				\ar[r]&0~.   }\]
     Because $f_0'$ is minimal, $v'v$ is an isomorphism. Hence, so is $u'u$.
     Therefore, $u$ and $v$ are sections.
\end{proof}

\subsection{Approximating sequences and {$\FM$}-resolutions}

We now prove that these two terminologies are equivalent.

\begin{lemma}
   Let $M,X$ be $A$-modules and $\xymatrix@1@C=15pt{0 \ar[r]&M_1 \ar[r]& M_0 \ar[r]^{f_0}& X \ar[r]& 0}$ be
   exact with $M_1$, $M_0\in\add M$.
   Then this sequence is an $\FM$-resolution if and only if it is a minimal $\add M$-approximating sequence.
\end{lemma}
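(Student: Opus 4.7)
The statement is essentially a dictionary: the two conditions composing ``projective cover in $\FM$'' (epimorphism plus right minimality) must be matched with the two conditions composing ``minimal $\add M$-approximating sequence'' ($\add M$-approximation plus right minimality of $f_0$). The plan is therefore to exploit the equivalence $\FM \simeq \mod\End M$ together with Yoneda's lemma on $\add M$ to translate each half of the statement separately.

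First, I would handle the epimorphism part. Since $\FM$ is an abelian category (equivalent to $\mod\End M$), a natural transformation $\Hom_A(-,M_0)\to\Hom_A(-,X)$ in $\FM$ is an epimorphism if and only if it is surjective when evaluated at every object of $\add M$. But evaluation of $\Hom_A(-,f_0)$ at $N\in\add M$ gives precisely $\Hom_A(N,f_0)\colon\Hom_A(N,M_0)\to\Hom_A(N,X)$, whose surjectivity for all $N\in\add M$ is the very definition of $f_0$ being an $\add M$-approximation of $X$. In particular, assuming either side of the statement guarantees that $\Hom_A(-,X)$ is actually finitely presented, so it is a genuine object of $\FM$.

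Next, I would translate the right-minimality condition via Yoneda. Natural transformations $\Hom_A(-,M_0)\to\Hom_A(-,M_0)$ are in bijection with elements of $\End(M_0)$ via $\phi\leftrightarrow\phi_{M_0}(\id_{M_0})$, the inverse sending $g\in\End(M_0)$ to $\Hom_A(-,g)$. Under this bijection, post-composition with $\Hom_A(-,f_0)$ corresponds to post-composition with $f_0$; that is, $\Hom_A(-,f_0)\circ\Hom_A(-,g)=\Hom_A(-,f_0g)$. Therefore the relation $\Hom_A(-,f_0)\phi = \Hom_A(-,f_0)$ corresponds exactly to $f_0g=f_0$, and a natural transformation $\phi$ is an isomorphism of functors iff the associated $g$ is an automorphism of $M_0$ (again by Yoneda). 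So $\Hom_A(-,f_0)$ is right minimal in $\FM$ if and only if $f_0$ is right minimal in $\mod A$.

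Combining the two translations, $\Hom_A(-,f_0)$ is an epimorphism which is right minimal in $\FM$ (the definition of a projective cover, hence of an $\FM$-resolution) if and only if $f_0$ is an $\add M$-approximation which is right minimal, i.e.\ the sequence is a minimal $\add M$-approximating sequence. There is no real obstacle here beyond being careful with the Yoneda identifications; the main point of the lemma is conceptual rather than technical, as it lets one switch freely between the functorial language of $\FM$-resolutions and the more concrete language of minimal $\add M$-approximations in all subsequent arguments.
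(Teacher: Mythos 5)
Your proof is correct and takes essentially the same route as the paper: both arguments translate the two halves of the statement (surjectivity of $\Hom_A(-,f_0)$ on $\add M$ versus $f_0$ being an $\add M$-approximation, and minimality on each side) across the restricted Yoneda embedding, using the equivalence $\FM\simeq\mod\End M$. The only cosmetic difference is that you express the projective-cover condition as ``right minimal epimorphism from the projective object $\Hom_A(-,M_0)$'' and transfer right minimality directly by Yoneda, whereas the paper phrases it through the retraction/least-length criterion of \cite{ARS}(I.2.2); these are equivalent formulations of the same dictionary.
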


\begin{proof}
   It suffices to prove that an $\add M$-approximation $f_0:M_0\to X$ is minimal if and only if
      $\Hom_A(-,f_0):\Hom_A(-,M_0)\to \Hom_A(-,X)$ is a projective cover in $\FM$.

   Indeed, $\Hom_A(-,f_0)$ is a projective cover if and only if, for any epimorphism $\Hom_A(-,f'):\Hom_A(-,M')\to\Hom_A(-,X)$
   with $M'\in\add M$, there exists a retraction $\Hom_A(-,g):\Hom_A(-,M')\to\Hom_A(-,M_0)$ such that $\Hom_A(-,f_0)\Hom_A(-,g)=\Hom_A(-,f')$.
   This is equivalent to saying that there exists a retraction $g:M'\to M_0$ such that $f_0g=f'$, or to saying that among all $\add M$-approximations of $X$, $f_0$ is the one whose domain $M_0$ has least length. Because of~\cite{ARS}(I.2.2), this is the same as requiring that $f_0$ is minimal.
\end{proof}

\subsection{Tilted Algebras}

We recall the definition of a tilted algebra.
Let $A$ be an algebra.
A module $T_A$ is said to be a \emph{tilting module} if $\projd T \leq 1$,  $\Ext^1_A(T,T)= 0$
and there exists a short exact sequence
    $\xymatrix@1@C=12pt{0 \ar[r]& A_A \ar[r]& T'_A \ar[r]& T''_A \ar[r]& 0}$ with $T'_A$, $T''_A$ in $\add T$.
Let $H$ be a hereditary algebra.
An algebra $A$ is \emph{tilted of type $H$} if there exists a tilting $H$-module $T$
   such that $A = \End T_H$.
If $H$ is the path algebra of a Dynkin (or euclidean, or wild) quiver $Q$, then we say that $A$ is tilted of Dynkin (or euclidean, or wild, respectively) type $Q$. For tilting theory we refer the reader to~\cite{ASS}.

Tilted algebras are characterised by the existence of slices in their module categories.
We recall from~\cite{Ri2}(Appendix) that a class $\Sigma$ in $\ind A$ is called a \emph{complete slice} if:
\begin{enumerate}[\indent(1)]
    \item $\Sigma$ is sincere, that is, if $P$ is any projective $A$-module,
        then there exists $U \in \Sigma$ such that $\Hom_A(P,U) \not= 0$;
    \item $\Sigma$ is convex, that is, if $\xymatrix@1@C=12pt{U_0 \ar[r]& U_1 \ar[r]&\cdots \ar[r]&U_t}$ is a path
        in $\ind A$ with $U_0$, $U_t\in\Sigma$ then $U_i \in \Sigma$ for all $i$;
    \item if $\xymatrix@1@C=12pt{0 \ar[r]& U \ar[r]& V \ar[r]& W \ar[r]& 0}$ is
        an almost split sequence, then at most one of $U$, $W$ lies in $\Sigma$.
        Moreover, if an indecomposable summand of $V$ lies in $\Sigma$, then $U$ or $W$ lies in $\Sigma$.
\end{enumerate}

It is shown (see, for instance, \cite{Ri}) that an algebra is tilted if and only if it contains a complete slice $\Sigma$.
In this case, the module $T = \bigoplus_{U \in \Sigma} U$ is a tilting module called the \emph{slice module} of $\Sigma$.
We need the following fact from~\cite{APT}.

\begin{lemma}
    Let $A$ be a tilted algebra, and $T$ the slice module of a complete slice.
    Then
    \begin{enumerate}[\indent(a)]
        \item for any $A$-module $X$ generated by $T$, there exists a minimal
            $\add(T \oplus \D A)$-approximating sequence for $X$ of the form
                \[\xymatrix@1{0 \ar[r]& T_1 \ar[r]& T_0 \oplus I_0 \ar[r]& X \ar[r]& 0}\]
            with $T_1$, $T_0 \in \add T$ and $I_0$ injective;
        \item the module $M = A \oplus \D A \oplus T$ is an Auslander generator for $\mod A$ and $\repdim A\le3$.
    \end{enumerate}
\end{lemma}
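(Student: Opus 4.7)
The plan is to prove (a) first via the Brenner--Butler tilting theorem, then deduce (b) by combining (a) on the torsion part with a separate analysis on the torsion-free part. Throughout, the key tool is that since $T$ is a slice module, $B = \End_A T$ is hereditary, and Brenner--Butler yields an equivalence $\Hom_A(T,-)\colon \cal T(T) \to \cal Y(T)$ between the torsion class $\cal T(T) = \operatorname{Gen} T$ in $\mod A$ and the torsion-free class $\cal Y(T)$ of a torsion pair in $\mod B$.

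For part (a), starting from $X$ generated by $T$, the $B$-module $N = \Hom_A(T,X)$ lies in $\cal Y(T)$ and, since $B$ is hereditary, admits a minimal projective resolution $0 \to Q_1 \to Q_0 \to N \to 0$. Transporting this back to $\mod A$ via the inverse equivalence $- \otimes_B T$ yields a sequence $0 \to T_1 \to T_0 \to X \to 0$ with $T_0, T_1 \in \add T$. To account for the injective summand $I_0$, one uses the fact that any injective direct summand of $X$ automatically lies in $\cal T(T)$ (and hence in $\add T$, since injectives inside a torsion class of a tilting module are Ext-injective and the Ext-injectives of $\cal T(T)$ coincide with $\add T$). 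Splitting off the maximal injective summand $I_0$ of $X$ and performing the above construction on the remaining part produces the required minimal $\add(T \oplus \D A)$-approximating sequence $0 \to T_1 \to T_0 \oplus I_0 \to X \to 0$; minimality descends from that of the projective resolution over $B$.

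For part (b), the characterisation of representation dimension at most three recalled in the excerpt, combined with the identification of $\FM$-resolutions with minimal $\add M$-approximating sequences, reduces the claim to constructing such a sequence for every $X \in \mod A$, with $M = A \oplus \D A \oplus T$. For $X \in \cal T(T)$, apply (a) directly: the sequence lies in $\add M$, and maps from $A$ to $X$ lift through the surjection $T_0 \oplus I_0 \to X$ by projectivity, so it is also a minimal $\add M$-approximating sequence. For general $X$, use the canonical torsion decomposition $0 \to tX \to X \to X/tX \to 0$ with $tX \in \cal T(T)$ and $X/tX \in \cal F(T)$: treat $tX$ via (a), and handle $X/tX$ by a dual argument exploiting the Brenner--Butler equivalence $\Ext^1_A(T,-)\colon \cal F(T) \to \cal X(T)$ together with the hereditarity of $B$, combining projective covers from $\add A$ with injective envelopes from $\add \D A$ to produce an $\add M$-approximating sequence whose kernel lies in $\add(A \oplus T)$. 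Splicing the two pieces via the horseshoe lemma and invoking the minimality lemma to cancel common summands yields the required minimal $\add M$-approximating sequence for $X$.

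The main obstacle is the torsion-free case in (b): constructing an $\add M$-approximating sequence with kernel in $\add M$ for $X \in \cal F(T)$ requires carefully matching the projective-cover and injective-envelope contributions with the dual Brenner--Butler equivalence so that the kernel lands in $\add(A \oplus T)$, since $B$-projectives correspond to $\cal Y(T)$ rather than $\cal X(T)$ and so the naive dualisation of (a) does not immediately apply. Maintaining minimality after the horseshoe splicing then requires a Fitting-type cancellation, following the direct-summand extraction lemma from the excerpt, to remove the superfluous common summands introduced by the two separate approximations.
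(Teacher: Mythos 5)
You attempt a genuine proof where the paper simply cites \cite{APT} ((1.4), (2.2)(f) and (2.3)), so an independent argument would be valuable, but part (a) as you construct it fails. The injective summand $I_0$ of the middle term is \emph{not} the maximal injective direct summand of $X$: its role is to make morphisms from arbitrary injective modules to $X$ factor through the middle term, and such morphisms exist for indecomposable non-injective $X$. Indeed every injective lies in $\operatorname{Gen}T=\{Y:\Ext^1_A(T,Y)=0\}$, and a map $I\to X$ between two torsion modules need not factor through $\add T$: for instance, if $A$ is tilted of euclidean type with the slice in the postprojective component and an injective $I$ sits in a tube, a nonzero map from $I$ to a later module $X$ on the same ray cannot factor through the slice, since morphisms from the tube to the postprojective component vanish. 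For such an $X$ (indecomposable, non-injective) your recipe gives $I_0=0$ and the sequence reduces to the Brenner--Butler sequence $0\to T_1\to T_0\to X\to 0$, which is an $\add T$-approximation but not an $\add(T\oplus\D A)$-approximation. The parenthetical justification is also false: it is the Ext-\emph{projectives}, not the Ext-injectives, of $\operatorname{Gen}T$ that coincide with $\add T$; as stated, your claim would force every injective $A$-module into $\add T$. The actual content of (a) --- that the kernel of the \emph{minimal} $\add(T\oplus\D A)$-approximation of a module in $\operatorname{Gen}T$ lies in $\add T$ --- is precisely what \cite{APT}(2.2)(f) establishes and is not addressed by your construction.

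In part (b) the missing idea is the torsion-free case. Since the torsion pair induced by a complete slice splits, every indecomposable is torsion or torsion-free, so the horseshoe splicing and Fitting-type cancellation are unnecessary; the problem is what to do with a torsion-free $X$. For such $X$ one has $\Hom_A(T,X)=0$ and $\Hom_A(\D A,X)=0$ (all injectives are torsion), so injective summands in the middle term would map to $X$ by zero, and the functor $\Ext^1_A(T,-)\colon\cal F(T)\to\cal X(T)$ does not by itself produce a short exact sequence $0\to M_1\to M_0\to X\to 0$ with terms in $\add M$; your description of ``combining projective covers from $\add A$ with injective envelopes from $\add\D A$'' does not identify how the kernel lands in $\add M$. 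The fact that makes this case work is that predecessors of a complete slice have projective dimension at most one (because $\Hom_A(\D A,\tau X)=0$), so the minimal projective resolution $0\to P_1\to P_0\to X\to 0$ lies in $\add A\subseteq\add M$ and is automatically an $\add M$-approximating sequence, all maps from $T\oplus\D A$ to $X$ being zero. With that ingredient and a corrected (a), the dichotomy torsion/torsion-free yields (b) essentially as in \cite{APT}(2.3), which is the result the paper invokes.
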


\begin{proof}
   \begin{enumerate}[\indent(a)]
      \item Existence of a minimal $\add(T\oplus\D A)$-approximation follows from~\cite{APT}(1.4) and~\cite{ARS}(I.2.2).
      That the kernel of this approximation lies in $\add T$ follows from~\cite{APT}(2.2)(f).
      \item This is~\cite{APT}(2.3).
   \end{enumerate}
\end{proof}

\section{Domestic Quasitube Algebras}

\subsection{The definition}

In this section, we define domestic quasitube algebras and prove that their representation dimension is 3.
First, we recall that a family of pairwise orthogonal generalised standard components $\cat C = \left( \cat C_i\right)_{i \in I}$ in the Auslander-Reiten quiver of an algebra $A$
is called a \emph{separating family} of components if the indecomposables not in $\cat C$ split into two classes $\cat P$ and $\cat Q$ such that:
    \begin{enumerate}[\indent(a)]
        \item $\Hom_A(\cat Q,\cat P)=0$, $\;\Hom_A(\cat Q,\cat C)=0\;$ and
              $\;\Hom_A(\cat C,\cat P)=0$, and
        \item any morphism from ${\cal P}$ to ${\cal Q}$ factors through add $C$.
    \end{enumerate}

We thus have $\ind A = \cat P \vee \cat C \vee \cat Q$.

For the admissible operations, we refer to~\cite{AST} or~\cite{MS1}.


Given a tame concealed algebra $C$, an algebra $A$ is a \emph{quasitube enlargement} of $C$ if $A$ is obtained from $C$ by an iteration of the admissible operations ad$\,$1), ad$\,$1*), ad$\,$2), ad$\,$2*) either on a stable tube of $\Gamma(\mod C)$ or on a quasitube obtained from a stable tube by means of the operations done so far.
A quasitube enlargement $A$ of $C$ is a \emph{domestic quasitube enlargement} provided $A$ is a domestic algebra.

\begin{definition}
   An algebra $A$ is a \emph{domestic quasitube algebra} provided
      $A$ is a domestic quasitube enlargement of a tame concealed algebra such that
         all projectives, and all injectives in its quasitubes are projective-injective.
\end{definition}

Thus, a quasitube in a domestic quasitube algebra becomes a stable tube after deletion of
   the projective-injectives and all arrows incident to them.

From now on, and until the end of the paper, we use the term quasitube algebra in this particular restricted sense.

Specialising theorems 3.5, 4.1 and Corollary 4.2 of \cite{AST} to our situation,
   we get the following structure theorem for domestic quasitube algebras and their module categories.

\begin{thm}
    Let $A$ be a domestic quasitube algebra obtained as a quasitube enlargement of a tame concealed algebra $C$.
    Then:
    \begin{enumerate}[\indent(a)]
        \item $A$ has a sincere separating family $\cal C$ of quasitubes obtained from the stable
            tubes of $C$ by the corresponding sequence of admissible operations;
        \item there is a unique maximal branch coextension $A^{-}$ of $A$ which is a full convex subcategory
            of $A$, and which is a tilted algebra of euclidean type;
        \item there is a unique maximal branch extension $A^+$ of $A$ which is a full convex subcategory
            of $A$, and which is a tilted algebra of euclidean type;
        \item $\ind A = \cal P \vee \cal C \vee \cal Q$, where $\cal P$ is the
            postprojective component of $\Gamma(\mod A^{-})$ while $\cal Q$ is the preinjective
                component of $\Gamma(\mod A^+)$.
    \end{enumerate}
\end{thm}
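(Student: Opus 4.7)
The plan is to deduce the four items from the general structure theory of quasitube enlargements developed in~\cite{AST}, namely Theorems~3.5 and~4.1 together with Corollary~4.2, and then to use the domesticity of $A$ together with the hypothesis that the projectives and the injectives lying in the quasitubes of $\cal C$ are projective-injective to force the two maximal convex subcategories $A^-$ and $A^+$ into the tilted euclidean class. By construction, $A$ is built from the tame concealed algebra $C$ by an iteration of the admissible operations ad$\,$1), ad$\,$1*), ad$\,$2), ad$\,$2*), performed either on a stable tube of $\Gamma(\mod C)$ or on a quasitube obtained during a previous step; at every step, Theorem~3.5 of~\cite{AST} tracks what happens to the affected components and shows that the resulting family $\cal C$ of quasitubes is pairwise orthogonal, generalised standard, sincere and separating inside $\Gamma(\mod A)$. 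This yields~(a) directly.

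For~(b) and~(c), I would appeal to Theorem~4.1 of~\cite{AST}, which records the local effect of each admissible operation: the operations of type ad$\,$1), ad$\,$2) glue branches on the ``extension side'' of the operated (quasi)tube, while the dual operations ad$\,$1*), ad$\,$2*) glue branches on the ``coextension side''. Collecting all branches attached on the coextension side together with $C$ and the needed linking points produces a full convex subcategory $A^-$ of $A$, maximal with the property of being a branch coextension of $C$; the extension side yields $A^+$ dually, and uniqueness is built into this description. It remains to verify that $A^-$ and $A^+$ are in fact tilted of euclidean type and not of tubular or wild type. This is where the two hypotheses combine: a branch coextension of $C$ is tilted of euclidean type precisely when its underlying branches keep it domestic, and the requirement that every projective and every injective inside $\cal C$ be projective-injective bounds the branches attached by ad$\,$1*), ad$\,$2*) to exactly the range that remains in the domestic class; domesticity of $A$ then propagates to the convex subcategory $A^-$, ruling out the tubular and wild possibilities. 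The argument for $A^+$ is symmetric.

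For~(d), I would combine~(a) with Corollary~4.2 of~\cite{AST}. Since $A^-$ is tilted of euclidean type, $\Gamma(\mod A^-)$ decomposes into a postprojective component, a separating family of stable tubes and a preinjective component; the postprojective component embeds into $\Gamma(\mod A)$ as the predecessor part $\cal P$ of $\cal C$, and dually the preinjective component of $\Gamma(\mod A^+)$ embeds as the successor part $\cal Q$. The separating property of $\cal C$ established in~(a) then forbids any further indecomposables, giving $\ind A = \cal P \vee \cal C \vee \cal Q$.

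The main obstacle I anticipate is the identification step in~(b) and~(c): one must scan through the sequence of admissible operations used to build $A$ and verify that, under the projective-injective constraint on $\cal C$ and the domesticity of $A$, the branches in $A^-$ and $A^+$ fit together with the arms of $C$ to yield complete slices of extended Dynkin type. Once this combinatorial verification is in place, everything else is a faithful translation of~\cite{AST} into the present framework.
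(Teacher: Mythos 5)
Your proposal follows essentially the same route as the paper: the paper obtains all four statements precisely by specialising Theorems 3.5, 4.1 and Corollary 4.2 of \cite{AST} to domestic quasitube enlargements, with domesticity (and the projective-injective condition on the quasitubes) forcing the maximal branch coextension $A^-$ and branch extension $A^+$ to be tilted of euclidean type, exactly as you argue. The only minor slip is your claim that $\Gamma(\mod A^-)$ has a separating family of \emph{stable} tubes: the tubes of the branch coextension $A^-$ are coray tubes which may contain injectives coming from the coextension branches, but this does not affect the identification of $\cal P$ and $\cal Q$ in part (d).
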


We may observe that an algebra $A$ is a domestic quasitube algebra if and only if it is a domestic algebra
with a separating family of quasitubes:
this indeed follows easily from Theorems A and F of~\cite{MS2}.

It follows easily from the description of the theorem that $A^-$ is the left support algebra of $A$, while $A^+$ is its right support algebra, in the sense of~\cite{ACT}. We recall that support algebras of subcategories have been used in~\cite{APT, ACW} in order to calculate the representation dimension.
It is then natural to use them in our context.

\begin{example}
   Let $A$ be given by the quiver
   \[\xymatrix{	
         1	&	2 \ar[l]_\delta	&		& 3\ar@<-.5ex>[ll]_\beta
                                          \ar@<.5ex>[ll]^\gamma	&	4 \ar[l]_\alpha
                                                               \ar[dll]^\lambda \\
            &		&  5\ar[ull]^\mu		         }\]
   bound by $\alpha\beta=0$, $\beta\delta=0$, $\alpha\gamma\delta=\lambda\mu$.
   In this example $A^-$ and $A^+$ are the full subcategories of $A$ with object classes
   $A_0^-=A_0\setminus \set{4}$ and $A_0^+=A_0\setminus\set{1}$, respectively.
   Both are tilted of type $\tilde{\mathbb{A}}$.
   The Auslander-Reiten quiver $\Gamma(\mod A)$ has the form
    \[\xymatrix@R=5pt@C=4pt{
       &&&&&&&&\ar@{-}@/^4pt/[]+0;[r]+0
               \ar@{-}@/_4pt/[]+0;[r]+0 \ar@{-}[ddddd];[]+0&\ar@{-}[ddddd];[]+0
                &\ar@{-}@/^4pt/[]+0;[r]+0
                 \ar@{-}@/_4pt/[]+0;[r]+0\ar@{-}[ddddd];[]+0&\ar@{-}[ddddd];[]+0
                  & {\sm{3\\2}}
                     \ar[dr]
                     \ar@{--}[ddddd]
                      &   &{\sm{3\\2\\1}}
                            \ar[dr]
                            &  & 5\ar[dr]
                                     & & {\sm{4\\3\\2}}
                                           \ar[dr]
                                              &&{\sm{3\\2}}
                                                 \ar@{--}[ddddd]&&&& \\
       &{\sm{2\\1}}
          \ar[dr]
          \ar@(r,u)[dddrrr]
          &&&  {\sm{~3\\522\\1~}}
                \ar[dr]
                \ar@(r,u)[ddrr]
              &&&\cdots&&&&&&{\sm{33\\22\\1}}
                                \ar[ur]
                                \ar[dr]
                                &&{\sm{3\\25\\1}}
                                      \ar[ur]
                                      \ar[]+<5pt,0pt>;[r]-<5pt,0pt>
                                      \ar[dr]
                                   &{\sm{4\\35\\2~\\1}}
                                      \ar[]+<5pt,0pt>;[r]-<5pt,0pt>
                                      &{\sm{4\\35\\2~}}
                                         \ar[ur]
                                            &&{\sm{4\\33\\22}}
                                                  \ar[ur]
                                                  &&\cdots
                                                     &{\sm{4\\335\\2}}
                                                        \ar[drr]
                                                        \ar@/^/[ddd]
                                                        &&&{\sm{4\\3}}\ar[dr]\\
       1\ar[ur]
        \ar[dr]
        && {\sm{25\\1}}
              \ar[urr]
              \ar[dr]
                 &&&{\sm{3\\22}}
                       \ar[dr]
                       &&&&&&&&&{\sm{33\\225\\1}}
                               \ar[ur]
                               &&{\sm{3\\2}}
                                  \ar[ur]&&&&&&&&{\sm{4\\35}}\ar[ur]\ar[dr]&&4			\\
       &{\sm{5\\1}}
          \ar[ur]
          &&2\ar[urr]
             \ar[dr]
             &&&\cdot&&&&&&&&&&&&&&&&&3\ar[ur]&&{\sm{4\\5}}\ar[ur]\\
       &&&&{\sm{3\\22\\1~}}
             \ar[urr]
             \ar@/_/[uuu]
             &&&\cdots
                &&&&&&&&&&&&&&\cdots&{\sm{4~\\33\\2}}\ar[ur]\ar@(r,d)[uuurrr]\\
       &&&&&&&&&&&&&&&&&&&&&&&&&&}
      \]
   where the indecomposables are represented by their Loewy series.
   The shown quasitube is obtained by identifying along the vertical dotted lines.
\end{example}

Some additional remarks are in order.
Let $A$ be a domestic quasitube algebra.
In the notation of the theorem, the postprojective component $\cat P$ of $\Gamma(\mod A)$
   coincides with the postprojective component of $\Gamma(\mod A^{-})$.
Now, we know that $A^{-}$ is a branch coextension of a tame concealed algebra, and is also a tilted algebra of euclidean type.
Therefore, $\cat P$ contains a complete slice $\Sigma^{-}$ of mod $A^{-}$.
However, $\Sigma^{-}$ is clearly not a complete slice in mod $A$, because the quasitubes of the family ${\cal C}$ generally contain projective-injectives.
It is a right section in the sense of~\cite{A1}.
We recall the definition.
Let $\Gamma$ be a translation quiver.
A full subquiver $\Sigma$ of $\Gamma$ is called a \emph{right section} if
    \begin{enumerate}[\indent(1)]
        \item $\Sigma$ is acyclic;
        \item for any $x \in \Gamma_0$ such that there exist $y\in\Sigma_0$ and a
            path from $y$ to $x$ in $\Gamma$, there exists a unique $n\ge0$ such
            that $\tau^n x \in \Sigma_0$;
        \item $\Sigma$ is convex in $\Gamma$.
    \end{enumerate}

Dually, one defines \emph{left sections}.
A subquiver which is at the same time a right and a left section is called a \emph{section}, see \cite{ASS}.

It follows easily from its definition that $\Sigma^{-}$ is a right section
in the postprojective component $\cal P$ of $\Gamma(\mod A)$
and that $A^{-} = A / \Ann\Sigma^{-}$, where $\Ann\Sigma^{-} = \bigcap_{U\in\Sigma^{-}} \Ann U$.

Dually, the preinjective component $\cat Q$ of $\Gamma( \mod A)$ contains
a complete slice $\Sigma^+$  in $\mod A^+$, which is not a slice in $\mod A$, but rather a left section.
Moreover, $A^+ = A / \Ann\Sigma^+$.

\subsection{Restriction of injectives}

When dealing with a module over a domestic quasitube algebra,
we need to consider its restriction to $A^{-}$, that is, the largest $A^{-}$-submodule of this module.
We recall from theorem 3.1 that each of $A^{-}$ and $A^+$ is a full convex subcategory of $A$,
   with $A^{-}$ closed under successors and $A^+$ closed under predecessors.
The following lemma is useful.

\begin{lemma}
    Let $A$ be an algebra, $B$ a quotient of $A$, $I$ an indecomposable injective $A$-module having socle in $\mod B$.
    Then the largest $B$-submodule $I'$ of $I$ is an indecomposable injective $B$-module.
\end{lemma}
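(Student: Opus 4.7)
The plan is to identify $I'$ with $\Hom_A(B,I)$ and invoke adjointness to transfer injectivity from $I$ to $I'$, then check indecomposability via the socle.

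Let $B = A/J$ for a two-sided ideal $J$ of $A$. A right $A$-module $M$ lies in $\mod B$ exactly when $MJ = 0$, so the largest $B$-submodule of any $A$-module $N$ is
\[ N' = \{x \in N : xJ = 0\}. \]
First, I would observe that the restriction-of-scalars functor $\iota : \mod B \to \mod A$ is fully faithful and exact, and that its right adjoint is $\Hom_A(B,-) : \mod A \to \mod B$. Writing an element of $\Hom_A(B,N)$ as $f \mapsto f(1)$ identifies this functor with the functor $N \mapsto N'$ described above. Since $\iota$ is exact, its right adjoint preserves injective objects.

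Applying this to $N = I$, the $B$-module $I'$ is injective in $\mod B$. It remains to see that $I'$ is indecomposable. By hypothesis, $\soc_A I$ lies in $\mod B$, i.e.\ $(\soc_A I) J = 0$, so $\soc_A I \subseteq I'$. On the other hand, since $I' \subseteq I$, any simple $B$-submodule of $I'$ is a simple $A$-submodule of $I$ and therefore lies in $\soc_A I$. Hence
\[ \soc_B I' = \soc_A I, \]
which is simple because $I$ is indecomposable injective over $A$. An injective module over an Artin algebra with simple socle is indecomposable, so $I'$ is an indecomposable injective $B$-module, as required.

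There is no real obstacle: the whole argument is an instance of the standard fact that the coinduction functor $\Hom_A(B,-)$ preserves injectives, plus a one-line socle computation. The only point worth stating carefully is the identification of the ``largest $B$-submodule'' with $\Hom_A(B,-)$, since that is what lets the adjunction do the work.
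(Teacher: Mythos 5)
Your proof is correct, and it reaches the conclusion by a slightly different route than the paper. The paper verifies the injectivity of $I'$ by hand: given a monomorphism $f:X\to Y$ and a morphism $g:X\to I'$ in $\mod B$, it extends $jg$ (where $j:I'\to I$ is the inclusion) to a morphism $h:Y\to I$ using injectivity of $I$ over $A$, and then observes that $h(Y)\subseteq I'$ because $Y$ is a $B$-module. Your argument packages exactly this mechanism functorially: identifying $I'=\{x\in I: xJ=0\}$ with $\Hom_A(B,I)$ and invoking the fact that the right adjoint of the exact restriction functor preserves injectives. Unwinding that adjunction argument recovers the paper's diagram, so nothing is lost, and your version has the advantage of being a quotable general principle. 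You also gain something the paper omits: the written proof in the paper only addresses injectivity and leaves indecomposability implicit, whereas your socle computation $\soc_B I'=\soc_A I$ (using the hypothesis that $\soc_A I$ is annihilated by $J$, hence contained in $I'$, and that simple $B$-submodules of $I'$ are simple $A$-submodules of $I$) gives a clean, explicit justification that $I'$ is indecomposable, since an injective module over an Artin algebra with simple socle is indecomposable.
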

\begin{proof}
   Let $f:X\to Y$ be a monomorphism and $g:X\to I'$ be a morphism in $\mod B$.
   Let also $j:I'\to I$ denote the canonical inclusion.
   Then we have a diagram as shown in $\mod A$.
   Because $I$ is injective in $\mod A$,
   there exists a morphism $h:Y\to I$ such that $hf=jg$.
   Because $Y$ is a $B$-module, $h(Y)\subseteq I'$.
   That is, there exists $h':Y\to I'$ such that $h'f=g$.
   \[\xymatrix@C=48pt{0\ar[r]&X\ar[r]^f
                        \ar[d]_g 		& Y\ar@{.>}[dl]^{h'}
                                          \ar@{.>}[ddl]^{h}\\
                      & I'\ar[d]_j\\
                      & I	                     }\]
\end{proof}

\subsection{Constructing the approximating sequence.}

Let $A$ be a domestic quasitube algebra.
As seen before, there exist a right section $\Sigma^{-}$ in the postprojective component of $\Gamma(\mod A)$ and a left section $\Sigma^+$ in the preinjective component.
Moreover, $\Sigma^{-}$ and $\Sigma^+$ are respectively complete slices in mod $A^{-}$ and mod $A^+$.
Let $T^{-}$ and $T^+$  denote the respective slice modules of $\Sigma^{-}$ and $\Sigma^+$.
We may choose the slice $\Sigma^+$ in such a way that every preinjective indecomposable $A$-module with support lying completely in the extensions branches of $A^+$ is a successor of $\Sigma^+$.
This is possible because there are only finitely many such indecomposables.
As an easy consequence, the restriction to $A^{-}$ of any preinjective predecessor of $\Sigma^+$ is nonzero.
We then set
    $$M = A \oplus \D A \oplus T^{-} \oplus T^+ \oplus \D A^{-}$$

We prove, in theorem 3.4  below, that $M$ is an Auslander generator for mod $A$.

Also useful is the module $N = A^{-}\oplus T^{-} \oplus \D A^{-}$.
Indeed, we recall that, because of lemma 2.5, every indecomposable $A^{-}$-module admits a minimal add $N$-approximating sequence.

\begin{proposition}
    Let $A$ be a domestic quasitube algebra and $X$ be an indecomposable $A$-module
    whose restriction $Y$ to $A^{-}$ is nonzero.
    Let also
        \[\xymatrix@R=12pt{
           0 \ar[r]& L \ar[r]^-r    & N_0  \ar[r]^-q & Y \ar[r]& 0\\
           0 \ar[r]& L' \ar[r]^{i'} & P \ar[r]^{p'}& X/Y \ar[r]&0 }\]
    be respectively a minimal $\add N$-approximating sequence and a
    projective cover of $X/Y$ in $\mod A$.
    Then there exists an $A$-module $K$ such that we have exact sequences
      \[\xymatrix@R=12pt{
         0 \ar[r]& K \ar[r]^-s & N_0     \ar[r]^-t & X \ar[r]& 0\\
         0 \ar[r]& L \ar[r]^\ell & K \ar[r]^{\ell'}& L' \ar[r]&0. }\]
   Moreover, $K\cong L\oplus L'$.
   In particular, $K\in\add N$.
\end{proposition}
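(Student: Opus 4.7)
The plan is to apply a horseshoe-type construction to the canonical short exact sequence $0\to Y\to X\to X/Y\to 0$, combining it with the two given sequences, and then to verify that the resulting extension of $L'$ by $L$ splits. Since $P$ is projective, the surjection $p':P\to X/Y$ admits a lift $\tilde p:P\to X$ along the canonical projection $\pi:X\to X/Y$; combined with the composite $N_0\xrightarrow{q}Y\hookrightarrow X$, this yields a morphism $\phi:N_0\oplus P\to X$, which is surjective by a direct diagram chase. Setting $K:=\Ker\phi$, the snake lemma applied to the horseshoe diagram produces the exact sequence $0\to L\to K\to L'\to 0$ whose outer maps are induced by the inclusions $L\hookrightarrow N_0$ and $L'\hookrightarrow P$. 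Note also that $L\in\add T^-\subseteq\add N$, because the minimal $\add N$-approximating sequence of the $A^-$-module $Y$ has kernel in $\add T^-$ by Lemma 2.6 applied to the tilted algebra $A^-$.

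To upgrade the above extension to a direct-sum decomposition $K\cong L\oplus L'$, I would construct an explicit section $L'\to K$ of the projection $K\to L'$. Such a section is equivalent to producing a morphism $\sigma:L'\to N_0$ satisfying $q\sigma=-\tilde p|_{L'}$, where the restriction $\tilde p|_{L'}:L'\to Y$ is well defined because $L'=\Ker p'$ forces $\tilde p(L')\subseteq\Ker\pi=Y$. Since $q$ is an $\add N$-approximation of $Y$, such a lift $\sigma$ exists as soon as $L'$ lies in $\add N$, in which case $K\cong L\oplus L'\in\add N$.

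The hard part of the argument is therefore proving $L'\in\add N$, and this is where the specific structure of a domestic quasitube enlargement, as recorded in Theorem 3.1, is essential. Because $X/Y$ has zero restriction to $A^-$, its support is contained in the branch vertices $A_0\setminus A_0^-$; consequently the top of its projective cover $P$ is concentrated on these branch vertices, and the indecomposable summands of $P$ can be read off from the admissible operations ad$\,1)$, ad$\,1^*)$, ad$\,2)$, ad$\,2^*)$ defining $A$ from its tame concealed kernel. The radicals of such branch projectives decompose, after restriction to $A^-$, into summands lying in $\add T^-$ and in $\add\D A^-$, so $L'\subseteq\rad P$ belongs to $\add(T^-\oplus\D A^-)\subseteq\add N$. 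Once this structural fact is established, the section described above is produced and the decomposition $K\cong L\oplus L'$ follows, completing the proof.
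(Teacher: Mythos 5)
Your construction of $K$ is exactly the paper's: lift the projective cover through $X\to X/Y$, form the surjection $N_0\oplus P\to X$, and apply the snake lemma to get $0\to L\to K\to L'\to 0$. Your splitting mechanism (an explicit section $L'\to K$ obtained by factoring $-\tilde p|_{L'}:L'\to Y$ through the $\add N$-approximation $q$) is correct and is a slight variant of the paper's, which instead restricts the middle row to $A^-$ and invokes minimality of $q$; both reduce the whole proposition to the single claim $L'\in\add N$.

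It is precisely at that claim -- which you rightly call the hard part -- that your argument has a genuine gap. You argue: the summands of $P$ are branch projectives, their radicals restrict to modules in $\add(T^-\oplus\D A^-)$, and since $L'\subseteq\rad P$ we get $L'\in\add(T^-\oplus\D A^-)$. The last inference is a non sequitur: being a submodule of a module in $\add N$ does not place a module in $\add N$, and you never show that $L'$ is even an $A^-$-module, let alone that it coincides with (or is a summand of) the $A^-$-restriction of $\rad P$. Moreover the intermediate assertion about the radicals of the branch projectives is stated without proof and does not invoke the one structural hypothesis that makes it true. The paper's route is: (i) identify $L'=\Ker p'$ as the largest $A^-$-submodule of $P$ (using that $X/Y$, hence the top of $P$, is supported off $A^-_0$); (ii) observe that the indecomposable summands of $P$, having tops at vertices outside $A^-$, are projective-\emph{injective} modules lying in the quasitubes -- this is exactly where the defining restriction on domestic quasitube algebras enters -- with socle in $\mod A^-$; (iii) apply Lemma 3.2 (restriction of injectives) to conclude that $L'$ is an injective $A^-$-module, so $L'\in\add\D A^-\subseteq\add N$. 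Without step (i) and without the projective-injectivity plus Lemma 3.2 replacing your unproved decomposition claim, the key membership $L'\in\add N$, and hence the splitting and the final statement $K\in\add N$, are not established.
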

\begin{proof}
   Let $\xymatrix@1@C=15pt{0\ar[r]&Y\ar[r]^i&X\ar[r]^-p&X/Y\ar[r]&0}$ be exact.
   Because $P$ is projective, there exists a morphism $g:P\to X$ such that $pg=p'$.
   We now claim that $t=(iq,g):N_0\oplus P\to X$ is an epimorphism.
   Let $x\in X$. Because $p'$ is surjective, there exists $z\in P$ such that $p(x)=p'(z)=pg(z)$.
   Therefore, $x-g(z)\in\Ker p=\Ima i=\Ima iq$ because $q$ is surjective.
   This establishes our claim.

   Let $(K,s)$ denote the kernel of $t$.
   The snake lemma yields a commutative diagram with exact rows and columns:
     \[\xymatrix@C=40pt@R=20pt{
                &0\ar[d]		  &0\ar[d]			  &0\ar[d]	 &				\\
        0 \ar[r]& L \ar[r]^-r
                    \ar[d]^\ell & N_0 \ar[r]^-q
                                      \ar[d]^{k=\left(1\atop 0\right)}
                                                  & Y \ar[r]
                                                     \ar[d]^i& 0			\\
        0 \ar[r]& K \ar[r]^-s
                    \ar[d]^{\ell'} & N_0\oplus P
                                      \ar[r]^-{t=(iq,g)}
                                      \ar[d]^{k'=(0~1)}
                                                  & X \ar[r]
                                                     \ar[d]^p& 0			\\
        0 \ar[r]& L' \ar[r]^{i'}
                     \ar[d] 		& P \ar[r]^{p'}
                                    \ar[d]		  & X/Y \ar[r]
                                                      \ar[d] &0.			\\
                & 0				  & 0					  & 0 		 & }\]
   Now observe that $K$ is an $A^-$-module.
   Indeed, because of lemma 2.5, $L\in\add N$ and, in particular, is an $A^-$-module.
   On the other hand, $L'$ is the largest $A^-$-submodule of $P$.
   Because of lemma 3.2, $L'$ is actually an injective $A^-$-module.
   This establishes our claim.

   Because $A^-$ is a projective $A$-module, applying the exact functor $\Hom_A(A^-,-)$ to the middle row of
   the above diagram yields an exact sequence
   \[\xymatrix@C=40pt@R=20pt{ 0 \ar[r]& K \ar[r]^-s& N_0\oplus L'\ar[r]^{t'} & Y \ar[r]& 0 }\]
   in $\mod A^-$, where we have used that $K$ is an $A^-$-module, hence $s(K)\subseteq N_0\oplus L'$.
   We deduce a commutative diagram with exact rows
     \[\xymatrix@C=40pt@R=20pt{
        0 \ar[r]& L \ar[r]^r\ar[d]^\ell & N_0 \ar[r]^q \ar[d]^k & Y\ar@{=}[d] \ar[r]&0\\
        0 \ar[r]& K \ar[r]^-s & N_0 \oplus L' \ar[r]^-{t'} & Y \ar[r]& 0 }\]
   where we have used that $k(N_0)\subseteq N_0\oplus L'$.
   Because $t':N_0\oplus L'\to Y$ is a morphism from a module in $\add N$ to $Y$,
   while $q$ is a minimal $\add N$-approximation,
   there exists $k'':N_0\oplus L'\to N_0$ such that we have a commutative diagram with exact rows
	\[\xymatrix@C=40pt@R=20pt{
  	0 \ar[r] & L \ar[r]^-r
                \ar[d]^\ell 		   & N_0 \ar[r]^-q
                                      \ar[d]^{k}		 & Y \ar[r]
                                                           \ar@{=}[d] & 0   \\
	0 \ar[r]	& K \ar[r]^-s
                 \ar[d]^{\ell''} 	& N_0\oplus L'
                                         \ar[r]^-{t'}
                                         \ar[d]^{k''} & Y \ar[r]
                                                           \ar@{=}[d] & 0   \\
	0 \ar[r] & L \ar[r]^{r}				& N_0 \ar[r]^{q}	& Y \ar[r]		 & 0}\]
   where $\ell''$ is deduced by passing to the kernels.
   Minimality of $q$ yields that $k''k$ is an isomorphism.
   Therefore, so is $\ell''\ell$.
   In particular, $\ell$ is a section and the short exact sequence
    \[\xymatrix@C=40pt@R=20pt{ 0 \ar[r]& L \ar[r]^-\ell& K\ar[r]^{\ell'} & L' \ar[r]& 0 }\]
   splits, that is, $K\cong L\oplus L'$.
   Finally, $K\in\add N$ because $L\in\add N$, while $L'\in\add \D A^-\subseteq \add N$.
\end{proof}

\subsection{Representation dimension}
\begin{thm}
    Let $A$ be a domestic quasitube algebra. Then $\repdim A = 3$.
\end{thm}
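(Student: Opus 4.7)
The lower bound is immediate: since $A$ is representation-infinite (being a quasitube enlargement of the tame concealed algebra $C$), Auslander's characterization forces $\repdim A \geq 3$. For the upper bound I will show that $M = A \oplus \D A \oplus T^- \oplus T^+ \oplus \D A^-$ is an Auslander generator. Since $A \oplus \D A$ is a summand, $M$ is automatically a generator-cogenerator, and Lemma 2.4 combined with Lemma 2.7 reduces the task to exhibiting, for every indecomposable $A$-module $X$, a minimal $\add M$-approximating sequence of $X$.

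The case split is on whether the restriction $Y = X|_{A^-}$ to the left support algebra vanishes. When $Y \neq 0$, Proposition 3.3 directly supplies an exact sequence $0 \to K \to N_0 \oplus P \to X \to 0$ with $K \cong L \oplus L'$ in $\add N \subseteq \add M$, $N_0 \in \add N$, and $P$ a projective $A$-module, so both the kernel and the middle term lie in $\add M$. The remaining task is to verify that the map $t = (iq,g)$ built in the proof of Proposition 3.3 is actually an $\add M$-approximation of $X$; once this is done, Lemma 2.5 extracts a minimal $\add M$-approximating sequence as a direct summand (whose kernel, being a summand of $K$, still sits in $\add M$) and Lemma 2.7 reinterprets this as the desired $\FM$-resolution. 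When $Y = 0$, the module $X$ has support in the extension branches of $A^+$ only and, by our choice of $\Sigma^+$, is a successor of $\Sigma^+$ in $\mod A^+$; the dual of Proposition 3.3 --- with the roles of $A^-$, $T^-$, $\D A^-$ and of projective covers replaced by $A^+$, $T^+$ and injective envelopes --- furnishes the required sequence.

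The hard part is the verification that $t \colon N_0 \oplus P \to X$ is an $\add M$-approximation. Given a morphism $f \colon M' \to X$ with $M'$ an indecomposable summand of $M$, a factorization of $f$ through $t$ must be produced. The easy subcases are $M'$ projective (lift $f$ through the epimorphism $t$) and $M' \in \add(T^- \oplus \D A^-)$ (then $M'$ is an $A^-$-module, so $f$ factors through $Y$ and then through the minimal $\add N$-approximation $q \colon N_0 \to Y$). The delicate subcases are $M'$ an indecomposable injective $A$-module --- handled via Lemma 3.2 by comparing $M'$ with its largest $A^-$-submodule --- and $M' \in \add T^+$, which I expect to resolve by combining the separating property of the quasitube family $\cal C$ (Theorem 3.1(a)) with the right/left section properties of $\Sigma^-$ and $\Sigma^+$ that constrain the possible $\Hom$-morphisms between $\cal P$, $\cal C$ and $\cal Q$.
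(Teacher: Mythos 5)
Your overall strategy---the same module $M=A\oplus\D A\oplus T^-\oplus T^+\oplus\D A^-$, Proposition~3.3 as the engine, reduction to minimal approximating sequences---is the paper's, but your case split on whether $Y=X|_{A^-}$ vanishes opens a genuine gap. The class $Y\neq 0$ also contains preinjective successors of $T^+$ (successors of $\Sigma^+$ can perfectly well have nonzero restriction to $A^-$), and for such $X$ the sequence produced by Proposition~3.3 is simply \emph{not} an $\add M$-approximating sequence; so the ``delicate subcase $M'\in\add T^+$'' that you hope to settle by separation and section arguments is false, not merely unproved. Concretely, in the paper's Example~3.1 take $X=\sm{4\\5}$: its restriction to $A^-$ is $Y=S(5)\neq 0$, the sequence of Proposition~3.3 has middle term $5\oplus P(4)$ (with $P(4)$ the projective-injective of socle $S(1)$), and the canonical epimorphism $\sm{4\\3\,5}\to\sm{4\\5}$ from the summand $\sm{4\\3\,5}$ of $T^+$ factors through neither term: $\Hom_A(\sm{4\\3\,5},5)=0$ because the top of $\sm{4\\3\,5}$ is $S(4)$, and $\Hom_A(\sm{4\\3\,5},P(4))=0$ because every nonzero submodule of the indecomposable injective $P(4)$ contains its socle $S(1)$, while $1$ is not in the support of $\sm{4\\3\,5}$.

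The division that works, and is the one the paper uses, is: (i) postprojective $X$, treated by the minimal $\add(A^-\oplus T^-\oplus\D A^-)$-approximating sequence coming from Lemma~2.5 applied to the tilted algebra $A^-$; (ii) $X$ in a quasitube or a preinjective predecessor of $\Sigma^+$ --- by the special choice of $\Sigma^+$ all such $X$ have $Y\neq 0$ --- where Proposition~3.3 applies and the verification is easy precisely because there $\Hom_A(T^+\oplus\D A^+,X)=0$, so your delicate subcases never arise; (iii) \emph{every} successor of $T^+$, whether or not $Y=0$, treated instead by the minimal $\add(T^+\oplus\D A^+)$-approximating sequence furnished by Lemma~2.5 applied to $A^+$ (such $X$ is generated by $T^+$, and morphisms from the remaining summands of $M$ factor through $T^+$ by the section and separation properties). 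Your $Y=0$ case, where you invoke a dual of Proposition~3.3, captures only part of class (iii); rerouting all of (iii) as above repairs the argument, and the rest of your outline then agrees with the paper's proof.
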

\begin{proof}
   Because $A$ is representation-infinite, it suffices to show that $\repdim A\le 3$.
   Let $M$ be as in 3.3 above.

   Let $X$ be a postprojective $A$-module (thus postprojective $A^-$-module).
   Because of lemma 2.5, there exists a minimal $\add(A^-\oplus T^-\oplus \D A^-)$-approximating sequence
    \[\xymatrix@C=40pt@R=20pt{ 0 \ar[r]& M_1 \ar[r]& M_0\ar[r] & X \ar[r]& 0. }\]
   Because projective $A^-$-modules are also projective $A$-modules, we have $M_1$, $M_0\in\add M$.
   Let $f:M'\to X$ be a nonzero morphism, where we may assume, without loss of generality, that $M'$ is an
   indecomposable summand of $M$.
   If $X$ is a predecessor of $\Sigma^-$, then $M'$ is projective and trivially $f$ lifts to a morphism $M'\to M_0$.
   If $X$ is a successor of $\Sigma^-$, then $M'\in\add(A^-\oplus T^-\oplus \D A^-)$, hence $f$ factors through $T^-$.
   This shows that we have an $\add M$-approximating sequence.

   Let now $X$ be a nonpostprojective $A$-module whose restriction $Y$ to $A^-$ is nonzero.
   In particular, this is the case for all indecomposables which belong either to the separating family of
   quasitubes or to the predecessors of $\Sigma^+$ in the preinjective component
   (this is due to our special choice of the slice $\Sigma^+$ defining $T^+$).
   Because of proposition 3.3, there exists an exact sequence
           \[\xymatrix@1@R=12pt{
              0 \ar[r]& K \ar[r]^-s    & T_0\oplus I_0\oplus P  \ar[r]^-t & X \ar[r]& 0  }\]
    with $K$, $T_0\oplus I_0\oplus P\in\add(A^-\oplus T^- \oplus \D A^-)$.
    We show that it is an $\add M$-approximating sequence.
    Let $f:M'\to X$ be a nonzero morphism with $M'\in \add M$ indecomposable.
    Because $f\ne 0$, we have $M'\notin\add(T^+\oplus \D A^+)$.
    Therefore, $M'\in\add(A\oplus T^- \oplus \D A^-)$.
    If $M'$ is projective then $f$ trivially lifts to a morphism $M'\to T_0\oplus I_0\oplus P$.
    If $M'\in\add(T^-\oplus\D A^-)$, then $f(M')$ lies in the restriction $Y$ of $X$ to $A^-$.
    Therefore, $f$ lifts to a morphism $M'\to T_0\oplus I_0$ and consequently to a morphism $M'\to T_0\oplus I_0\oplus P$.

    Let finally $X$ be a successor of $T^+$.
    Because of lemma 2.5, there exists a minimal $\add(T^+\oplus \D A^+)$-approximating sequence
           \[\xymatrix@1@R=12pt{
              0 \ar[r]& T_1' \ar[r]   & T_1\oplus I_1 \ar[r] & X \ar[r]& 0 . }\]
    Because $A^+$-injectives are also $A$-injectives, we have $T_1'$, $T_1\oplus I_1 \in\add M$.
    Let $f:M'\to X$ be a nonzero morphism with $M'\in\add M$ indecomposable.
    If $M'\in\add(T^+\oplus \D A^+)$, then clearly $f$ lifts to a morphism $M'\to T_1\oplus I_1$.
    If $M'\notin \add(T^+\oplus \D A^+)$, then $f$ must factor through $T^+$ and thus also lifts to a
    morphism $M'\to T_1\oplus I_1$.
    This finishes the proof.
\end{proof}

\begin{example}
   In example 3.1, we may take $\Sigma^-=\set{\sm{2\\1},\sm{2\,5\\1},2,\sm{3\\2\,2\\1\,~}}$ and
   $\Sigma^+=\set{\sm{4\\3},\sm{4\\3\,5},3,\sm{~\,4\\3\,3\\2}}$.
   Indeed, the only indecomposables with support in the extension branch which are preinjective, namely $\sm{4\\5}$ and $4$,
   are successors of $\Sigma^+$.

   On the other hand, $\D A^-=\sm{3\,~\\2\,5\\1}\oplus \sm{3\,3\\2}\oplus 3\oplus 5$, so that the Auslander generator is
   \[ M= 1\oplus \sm{2\\1}\oplus \sm{5\\1}\oplus \sm{3\\2\,2\\1\,~}\oplus \sm{4\\{3\atop 2}\,5\\ 1}\oplus\sm{4\,~\\3\,3\\2}\oplus
   \sm{4\\3}\oplus\sm{4\\5}\oplus 4\oplus 3\oplus 5\oplus 2\oplus \sm{2\,5\\1}\oplus \sm{3\\2\,5\\1}\oplus\sm{3\,3\\2}\oplus \sm{4\\3\,5}.
   \]
\end{example}

\section{Gluings of Algebras}

\subsection{Finite gluings}

The purpose of this section is to show how to glue together algebras having representation dimension three
   in order to construct larger algebras having the same representation dimension.
We need to introduce a notation.
Let $A$ be a representation-infinite algebra, having a right section $\Sigma$ in a postprojective component,
   or a left section, also denoted by $\Sigma$, in a preinjective component of $\Gamma(\mod A)$.
We denote by $\ivec{\Sigma}$ the set of all indecomposable $A$-modules $X$ which are predecessors of $\Sigma$,
   that is, such that there exist $Y$ in $\Sigma$ and a path in $\mod A$ from $X$ to $Y$.
Dually, we denote by $\vec{\Sigma}$ the set of all indecomposable $A$-modules which are successors of $\Sigma$.

\begin{definition}
    We say that an algebra $A$ is a \emph{finite gluing} of two algebras $B$ and $C$, in symbols $A = B*C$, if
    \begin{itemize}
        \item [\indent(FG1)] $\Gamma(\mod B)$ has a unique preinjective component $\cat Q_B$ containing a
            left section $\Sigma^+_B$ and $\Gamma(\mod C)$ has a unique postprojective
            component $\cat P_C$ containing a right section $\Sigma^-_C$;
        \item [\indent(FG2)] $\Gamma(\mod A)$ has a separating component $\cat G$ such that:
            \begin{enumerate}[\indent(1)]
                \item $\cal G$ contains a left section isomorphic to $\Sigma^+_B$
                    and the indecomposable $A$-modules in $\cat G$ which precede it are exactly those
                    of $\ivec{\Sigma}_B^+ \cap \cat Q_B$,
                \item $\cal G$ contains a left section isomorphic to $\Sigma^-_C$
                    and the indecomposable $A$-modules in $\cat G$ which succede it are exactly those
                    of $\vec{\Sigma}^{-}_C \cap \cat P_C$,
                \item $(\ivec{\Sigma}^+_B \cap \cat Q_B) \cup
                        (\vec{\Sigma}^-_C \cap \cat P_C)$ is cofinite in $\cat G$;
            \end{enumerate}
        \item [\indent(FG3)] the remaining indecomposable $A$-modules belong to one of two classes:
            \begin{enumerate}[\indent(1)]
                \item those which precede $\cal G$ are the indecomposable $B$-modules
                        in $\ivec{\Sigma}^+_B \setminus \cat Q_B$,
                \item those which succede $\cal G$ are the indecomposable $C$-modules
                        in $\vec{\Sigma}^-_C \setminus \cat P_C$.
            \end{enumerate}
    \end{itemize}
\end{definition}

Thus we have
   \[\ind A = (\ivec{\Sigma}^+_B \setminus \cat Q_B)
                   \,\vee\; \cat G \;
                   \vee\,  (\vec{\Sigma}^-_C \setminus \cat P_C).\]
The component $\cat G$ is called the \emph{glued component} of $\Gamma(\mod A)$.
The latter may be visualised as
\[\xymatrix@C=10pt@R=10pt{
   \ar@{-}@(r,r)[dddd]
   	&&					&&					&&		&&			&&  \ar@{-}@(l,l)[dddd]\\
      &\ar@{.}[r]
       &\ar@{-}[r]
                     &&
                         				&& 	&\ar@{-}[r]&\ar@{.}[r]
                                                   &&  \\
   {\ivec{\Sigma}^+_B \setminus \cat Q_B}
      &&    {\ivec{\Sigma}^+_B \cap \cat Q_B}
                      && {\Sigma^+_B} &&	{\Sigma^-_B}
                                          && {\vec{\Sigma}^-_C \cap \cat P_C}
                                                   &&  {\vec{\Sigma}^{-}_C \setminus \cat P_C}   \\
      &\ar@{.}[r]& \ar@{-}[r] && && &\ar@{-}[r]&\ar@{.}[r]&&  \\
      && && &{\cat G} & && &&
      \save "2,4"+<-3pt,8pt>."4,8"*[F-:<10pt>]\frm{}\restore
   }\]

\begin{example}
   Let $B$ be the domestic quasitube algebra given by the fully commutative quiver
   \[\xymatrix@R=6pt{
         &	2\ar[dl]	&	4\ar[ddl]\ar[l]	&	\\
       1 &		&		& 6\ar[ul]\ar[dl]\\
         &	3\ar[ul]	&	5\ar[uul]\ar[l]	&	}\]
   and $C$ be the hereditary algebra given by the quiver
   \[\xymatrix@R=6pt{
      	4 	&						&	7\ar[dl]\\
            &	6\ar[dl]\ar[ul]&\\
        5   &						&	8\ar[ul].	}\]
   Then the algebra $A$ given by the quiver
   \[\xymatrix@R=6pt{
        &	2\ar[dl]	&	4\ar[ddl]\ar[l]	&						& 7\ar[dl]	\\
      1 &				&							& 6\ar[ul]\ar[dl]	&				\\
        &	3\ar[ul]	&	5\ar[uul]\ar[l]	&						& 8\ar[ul]  }\]
   bound by $\rad^4A=0$, two zero-relations of length three from 7 to 3 and from 8 to 2, respectively,
   and all possible commutativity relations is a finite gluing of $B$ and $C$.
   We draw below the central part of the glued component of $\Gamma(\mod A)$
   \[   \xymatrix@R=10pt{
               &			      &		&						& {\sm{7\\6\\4\,5\\2}}
                                                               \ar[dr]		&\\
               &{\sm{4\,5\\3}}
                     \ar[ddr]	&		& {\sm{6\\4\,5\\2}}
                                          \ar[ddr]
                                           \ar[ur]		&							&{\sm{7\\6\\4\,5}}
                                                                                  \ar[ddr]&		& {\sm{8\\6\,6\\4\,5}}\ar[ddr]      \\
               &{\sm{6\\4\,4\,5\\2\,3}}
                  \ar[dr]		&		& 5\ar[dr]			&							&	{\sm{6\\4}}
                                                                                 \ar[dr]	&		& {\sm{7\,8\\6\,6\\4\,5\,5}}
                                                                                                   \ar[dr]  \\
     \cdots\qquad
        \ar[uur]\ar[ur]
        \ar[ddr]\ar[dr]
               &					&{\sm{6\\4\,4\,5\,5\\2\,3}}
                                        \ar[uur]
                                        \ar[ur]
                                        \ar[ddr]
                                        \ar[dr]
                                    &                 & {\sm{6\\4\,5}}
                                                             \ar[uur]
                                                             \ar[ur]
                                                             \ar[ddr]
                                                             \ar[dr]			&					& {\sm{7\,8\\6\,6\,6\\4\,4\,5\,5}}
                                                                                                 \ar[uur]
                                                                                                 \ar[ur]
                                                                                                \ar[ddr]
                                                                                                 \ar[dr]
                                                                                                &							& \qquad\cdots      \\
               &{\sm{6\\4\,5\,5\\2\,3}}
                  \ar[ur]		&		& 4\ar[ur]			&							&	{\sm{6\\5}}
                                                                                 \ar[ur] &		& {\sm{7\,8\\6\,6\\4\,4\,5}}
                                                                                                     \ar[ur]  \\
               &{\sm{4\,5\\2}}\save[]+<2pc,-1.8pc>*{\Sigma^+_B}\restore
                  \ar[uur]		&		& {\sm{6\\4\,5\\3}}
                                          \ar[uur]
                                           \ar[dr]		&							&{\sm{8\\6\\4\,5}}
                                                                              \ar[uur]	  &		& {\sm{7\\6\,6\\4\,5}}\ar[uur]
                                                                                                   \save[]+<-2pc,-2.2pc>*{\Sigma^-_C}\restore      \\
               &		&		&						& {\sm{8\\6\\4\,5\\3}}
                                                            \ar[ur]		   &					&		&
               \save "2,2"+<-20pt,10pt>.{"6,3"+<15pt,-10pt>}*[F-:<6pt>]\frm{}\restore
               \save "2,7"+<-20pt,15pt>.{"6,8"+<20pt,-15pt>}*[F-:<6pt>]\frm{}\restore
                                                                  }\]
\end{example}

\subsection{Representation dimension of gluings}
We now prove that a finite gluing of two algebras having representation dimension three
also has representation dimension three under a reasonable hypothesis.

\begin{proposition}
    Let $A = B*C$.
    We assume that $\repdim B = 3$, $\repdim C = 3$ and the slice module $\Sigma^-_C$
    is a direct summand of an Auslander generator for mod $C$. Then $\repdim A = 3$.
\end{proposition}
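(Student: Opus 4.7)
The plan is to exhibit an explicit generator-cogenerator $M$ of $\mod A$ by combining Auslander generators of $\mod B$ and $\mod C$ with the finite ``middle strip'' of the glued component $\cat G$, then verify that every indecomposable $A$-module admits an $\cat F_M$-resolution (whence $\gldim \End M \le 3$ by Lemma~2.2). Since $A$ is representation infinite, $\repdim A \ge 3$, so the equality $\repdim A = 3$ will follow. Concretely, I choose Auslander generators $M_B$ of $\mod B$ and $M_C$ of $\mod C$ with $\Sigma^-_C \in \add M_C$ (possible by hypothesis). Let $\cat G_{\mathrm{mid}}$ denote the complement in $\cat G$ of $(\ivec{\Sigma}^+_B \cap \cat Q_B) \cup (\vec{\Sigma}^-_C \cap \cat P_C)$, which is finite by (FG2)(3). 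Set
\[ M \;=\; A \oplus \D A \oplus M_B \oplus M_C \oplus \bigoplus_{Y \in \cat G_{\mathrm{mid}}} Y,\]
viewed as an $A$-module via the embeddings $\mod B, \mod C \hookrightarrow \mod A$; this is a generator-cogenerator of $\mod A$.

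The case analysis on $X$ follows the decomposition $\ind A = \cat L \vee \cat G \vee \cat R$. For $X \in \ivec{\Sigma}^+_B$, by (FG3)(1) and (FG2)(1), $X$ is a $B$-module; I take the $\cat F_{M_B}$-resolution $0 \to N_1^B \to N_0^B \to X \to 0$ in $\mod B$ (exact also in $\mod A$), and check it is an $\add M$-approximating sequence in $\mod A$ by lifting every nonzero morphism $f : M' \to X$ with $M'$ an indecomposable summand of $M$. Summands of $M$ in $\cat R$, or in the part of $\cat G$ lying strictly to the right of $X$, admit no nonzero morphism to $X$: this is the separation axiom $\Hom_A(\cat R, \cat L \cup \cat G) = 0$ for the former, and generalized standardness of $\cat G$ (no path from right to left in such a component) for the latter. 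The remaining summands lie in $\cat L \cup (\ivec{\Sigma}^+_B \cap \cat Q_B)$ and are $B$-modules; in particular, any $A$-projective or $A$-injective summand lying there is automatically a $B$-projective or $B$-injective, by Lemma~3.2 and its dual, and therefore lies in $\add M_B$. The $\cat F_{M_B}$-property then provides the lift.

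The case $X \in \cat G_{\mathrm{mid}}$ is trivial: $X \in \add M$, and $0 \to 0 \to X \to X \to 0$ does the job. For $X \in \vec{\Sigma}^-_C$, $X$ is a $C$-module, and I start from the $\cat F_{M_C}$-resolution $0 \to N_1^C \to N_0^C \to X \to 0$ in $\mod C$. Summands of $M$ in $\add M_C$ lift directly via the $\cat F_{M_C}$-property, and $A$-projective summands lift by projectivity. For any remaining summand $M'$ of $M$ --- typically in $\add M_B$, $\add \cat G_{\mathrm{mid}}$, or an $A$-injective not lying in $\add M_C$ --- the morphism $f : M' \to X$ need not vanish, since left-to-right morphisms are permitted by the separating component axiom. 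This is precisely where the assumption $\Sigma^-_C \in \add M_C$ is decisive: because $\Sigma^-_C$ is a left section in the generalized standard component $\cat G$, a standard factorisation lemma for sections implies that any morphism from a predecessor of $\Sigma^-_C$ in $\mod A$ to a successor of $\Sigma^-_C$ factors through $\add \Sigma^-_C$. Writing $f = h \circ g$ with $g : M' \to T$, $T \in \add \Sigma^-_C \subseteq \add M_C$, and $h : T \to X$, the $\cat F_{M_C}$-property lifts $h$ to $\widetilde h : T \to N_0^C$, and $\widetilde h \circ g$ is the required lift of $f$.

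The main obstacle is this last case, where the hypothesis on $\Sigma^-_C$ is indispensable: without it one cannot lift the left-to-right morphisms into $X$ coming from the $B$-side of the gluing, since the separating-component axioms alone do not constrain morphisms from $\cat L$ to $\cat G$. The remaining verifications --- the projective/injective summand analysis, the left-section factorisation for generalized standard components, and the preservation of exactness under the embeddings $\mod B, \mod C \hookrightarrow \mod A$ --- are either routine or formal analogues of arguments already carried out in Section~3 for domestic quasitube algebras.
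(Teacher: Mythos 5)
Your proposal is correct and takes essentially the same route as the paper's own proof: the same generator-cogenerator $A\oplus \D A\oplus M_B\oplus M_C\oplus(\text{finite middle strip of }\cat{G})$, the same three-case analysis along $\ind A=\ivec{\Sigma}^+_B\vee\cat{G}_{\mathrm{mid}}\vee\vec{\Sigma}^-_C$, and the same use of the hypothesis, namely that a nonzero morphism from a non-successor of $\Sigma^-_C$ to $X\in\vec{\Sigma}^-_C$ factors through $\add\Sigma^-_C\subseteq\add M_C$ and hence lifts along the approximation taken in $\mod C$. If anything, you make explicit two points the paper leaves implicit (that $A$-projective or $A$-injective summands lying in the $B$- or $C$-regions are projective, respectively injective, over $B$ or $C$, and the section-factorisation step combining the separating-component axioms with generalized standardness of $\cat{G}$).
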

\begin{proof}
   We introduce some notation.
   Let $M$ denote an Auslander generator for $\mod B$, and $N$ denote an Auslander generator for $C$ having the slice module of $\Sigma^-_C$
   as a direct summand.
   Finally, let $L$ denote the direct sum of the (finitely many) indecomposable modules in the glued component of $\Gamma(\mod A)$
   which are successors of $\Sigma^+_B$ and predecessors of $\Sigma^-_C$.
   We claim that
      \[ \overline M_A = A\oplus \D A\oplus L\oplus M\oplus N\]
    is an Auslander generator for $\mod A$.

   Let indeed $X$ be an indecomposable $A$-module.
   Assume first that $X\in\ivec{\Sigma}^+_B$.
   In particular, $X$ is an indecomposable $B$-module and has a minimal $\add M_B$-approximating sequence
   \[\xymatrix{ 0\ar[r]&M_1\ar[r]&M_0\ar[r]&X\ar[r]&0}\]
   with $M_1$, $M_0\in\add M\subseteq \add \overline M$.
   We claim that this sequence is also an $\add \overline M$-approximating sequence in $\mod A$.
   Let $f:M'\to X$ be a nonzero morphism with $M'\in\add \overline M$ indecomposable.
   Because $f$ is nonzero, $M'\in\ivec{\Sigma}^+_B\setminus \cal Q_B$.
   Therefore $M'\in\add M$.
   But then $f$ lifts to a morphism $M'\to M_0$.

   Let $X\in \add L$, then there is nothing to show.

   Finally, let $X\in\vec{\Sigma}^-_C$.
   Then $X$ is a $C$-module and has a minimal $\add N_C$-approximating sequence
   \[\xymatrix{ 0\ar[r]&N_1\ar[r]&N_0\ar[r]&X\ar[r]&0}\]
   with $N_1$, $N_0\in\add N\subseteq \add \overline M$.
   We claim that this sequence is also an $\add \overline M$-approximating sequence.
   Let $f:M'\to X$ be a nonzero morphism with $M'$ indecomposable.
   If $M'$ is not a successor of  $\Sigma^-_C$, then the morphism $f$ lifts to a morphism $M'\to N_0$.
   If $M'$ is a successor of  $\Sigma^-_C$ in $\add \overline M$, then $M'$ is an indecomposable $C$-module.
   So $f$ lifts to a morphism $M'\to N_0$ because the above sequence is an $\add N$-approximating sequence.
\end{proof}

\subsection{Induction}

Let $A = B*C$ be a finite gluing of $B$ and $C$ as above. If $A$ has a unique preinjective component containing a left section $\Sigma^+_A$, then we can define in the same way a finite gluing of $A$ with an algebra $D$ having a unique postprojective component containing a right section. Inductively, assuming that $B_1, \cdots B_n$ are a finite sequence of algebras having a unique preinjective component containing a left section $\Sigma^+_{B_{i}}$ and a unique postprojective component containing a right section $\Sigma^-_{B_{i+1}}$, for $ 1 \leq i < n$,
then we say that $A = B_1* \cdots * B_n$ is a finite gluing of the $B_i$ if $A = (B_1* \cdots * B_{n-1})* B_n$.

\begin{corollary}
    Let $A = B_1* \cdots * B_n$ where we assume that $\repdim B_i = 3$
    and the slice modules of $\Sigma^+_{B_i}$, $\Sigma^-_{B_{i+1}}$ for $ 1 \leq i < n$  are direct summands
    of an Auslander generator for mod $B_{i+1}$. Then $\repdim A = 3$.
\end{corollary}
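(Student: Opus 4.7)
The plan is a straightforward induction on the number $n$ of algebras being glued. The base case $n=2$ is exactly the content of the preceding proposition. For the inductive step, I write $A = (B_1 * \cdots * B_{n-1}) * B_n$ and invoke the proposition on this two-fold gluing, with $B := B_1 * \cdots * B_{n-1}$ playing the role of the left factor and $C := B_n$ the role of the right factor.

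To apply the proposition I must check three things. First, $\repdim(B_1 * \cdots * B_{n-1}) = 3$; this follows from the induction hypothesis applied to the first $n-1$ algebras, the slice-module conditions for indices $i < n-1$ being inherited directly from the hypothesis of the corollary. Second, $\repdim B_n = 3$, which is given. Third, the slice module of $\Sigma^-_{B_n}$ must be a direct summand of an Auslander generator for $\mod B_n$; this is precisely the case $i = n-1$ of the corollary's hypothesis.

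What still requires verification is that the iterated gluing $B_1 * \cdots * B_{n-1}$ fits the setup of the definition of finite gluing, namely that $\Gamma(\mod(B_1 * \cdots * B_{n-1}))$ has a unique preinjective component containing a left section. This is read off from the decomposition
\[\ind A = (\ivec{\Sigma}^+_B \setminus \cat{Q}_B) \,\vee\, \cat{G} \,\vee\, (\vec{\Sigma}^-_C \setminus \cat{P}_C)\]
for a two-step gluing: the preinjective structure of $A$ is entirely inherited from the rightmost factor $C$. Iterating, the preinjective component of $B_1 * \cdots * B_{n-1}$ coincides with that of $B_{n-1}$, and the left section $\Sigma^+_{B_{n-1}}$ survives as the left section needed for the next gluing.

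I do not expect any serious obstacle: the substantive argument was made in the proposition, and this corollary is a formal iteration. The only point demanding attention is bookkeeping — ensuring that the preinjective component (and its left section) persists unchanged through successive gluings so that the hypothesis referring to $\Sigma^+_{B_i}$ retains meaning at each stage, and that the Auslander generator $\overline{M}$ produced by the proposition at step $n-1$ contains, as a direct summand, an Auslander generator for $B_{n-1}$ (hence in particular the slice module of $\Sigma^+_{B_{n-1}}$, by hypothesis). Both points are immediate from the explicit form $A \oplus \D A \oplus L \oplus M \oplus N$ of the Auslander generator constructed in the proof of the proposition, so the induction proceeds without obstruction.
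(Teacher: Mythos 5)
Your proposal is correct and follows exactly the paper's own argument: induction on $n$ with the case $n=2$ given by Proposition 4.2, applied to $A=(B_1*\cdots*B_{n-1})*B_n$. The bookkeeping you spell out (persistence of the preinjective component and its left section through the iterated gluing, and the explicit form $A\oplus \D A\oplus L\oplus M\oplus N$ of the Auslander generator) is precisely what the paper leaves implicit in its definition of $B_1*\cdots*B_n$ and its one-line proof.
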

\begin{proof}
   This is done by induction on $n\ge 2$, the case $n=2$ being Proposition 4.2.
\end{proof}

\subsection{Duplicated and replicated algebras}
We end this section with an application of gluing to a class of algebras of finite global dimension, which are closely related to selfinjective algebras. Let $B$ be an algebra and consider the matrix algebra
$$\overline{B} = \left(  \begin{array}{cc}
                            B & 0 \\
                            \D B & B \\
                    \end{array} \right)$$
with the ordinary matrix addition and the multiplication induced from the bimodule structure of $\D B$.

This algebra is called the \emph{duplicated algebra} of $B$, see for instance~\cite{A2}.

\begin{proposition}
    Let $B$ be a tilted algebra of euclidean type.
    \begin{enumerate}[\indent(a)]
        \item If $B$ admits a complete slice in its postprojective component,
        then there exist domestic quasitube algebras $C_1, C_2$ such that $\overline{B} = B*C_1*C_2$.
        \item If $B$ admits a complete slice in its preinjective component,
        then there exist domestic quasitube algebras $C_1, C_2$ such that $\overline{B} = C_1*C_2*B$.
    \end{enumerate}
    In particular, $\repdim\overline{B} = 3$.
\end{proposition}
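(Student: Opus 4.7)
The plan is to exploit the structure of the duplicated algebra $\overline{B}$, whose quiver consists of two copies of $Q_B$ connected by arrows corresponding to a basis of $\D B$, realising $\overline{B}$ as a finite convex piece of the repetitive category $\hat{B}$. I will work out case (a) in detail and obtain (b) by duality. Assuming $B$ has a complete slice $\Sigma$ in its postprojective component, $B$ is a domestic tilted algebra of euclidean type, so $\Gamma(\mod B)$ has the standard shape: a postprojective component $\cal P_B$, a separating $\mathbb{P}_1(\kk)$-family of tubes $\cal T_B$, and a preinjective component $\cal Q_B$.

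The first step is to identify $C_1$ and $C_2$ as convex subcategories of $\overline{B}$. Let $B'$ denote the second copy of $B$ sitting inside $\overline{B}$. I define $C_1$ to be the full convex subcategory of $\overline{B}$ consisting of $B$ together with the vertices of $B'$ corresponding to the supports of indecomposable injectives whose socles coincide with those $B$-projectives involved in the $\D B$-bimodule structure. Symmetrically, $C_2$ is the full convex subcategory generated by $B'$ together with the analogous vertices coming from $B$. The connecting bimodule $\D B$ realises $C_1$ as a branch extension of the tame concealed quotient of $B$ (obtained after cutting the preinjective branches), where every added projective is projective-injective in $\overline{B}$ (being obtained by adjoining injectives to projectives via the $\D B$-connection). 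Dually, $C_2$ is a branch coextension of the tame concealed quotient of $B'$ with all added injectives projective-injective. By the definition in Section 3, both $C_1$ and $C_2$ are domestic quasitube algebras; their separating families of quasitubes are precisely obtained from the tubular families of the underlying concealed algebras by the admissible operations induced by the $\D B$-bimodule.

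The second step is to verify the finite gluing conditions (FG1)--(FG3) for $\overline{B}=B*C_1*C_2$. The preinjective component $\cal Q_B$ of $\mod B$ contains a left section $\Sigma^+_B$, and by Theorem 3.1(d), $\mod C_1$ has a postprojective component ending in a right section $\Sigma^-_{C_1}$ which coincides, as a quiver, with $\Sigma^+_B$; the modules beyond $\Sigma^+_B$ in $\cal Q_B$ and before $\Sigma^-_{C_1}$ in $\cal P_{C_1}$ merge into a single glued component of $\Gamma(\mod \overline B)$ containing the relevant projective-injectives. The same analysis, applied at the interface between $C_1$ and $C_2$ using the respective sections $\Sigma^+_{C_1}$ (in the preinjective component of $\mod C_1$) and $\Sigma^-_{C_2}$ (in the postprojective component of $\mod C_2$), gives the second gluing. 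This yields $\overline{B} = B * C_1 * C_2$ in the sense of Section 4.

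For the final conclusion, $\repdim B = 3$ by Lemma 2.6, and $\repdim C_1 = \repdim C_2 = 3$ by Theorem 3.4. Moreover, the Auslander generator $M$ for $B$ constructed from the slice module $T$ of $\Sigma$ contains $T$ as a summand, and the Auslander generators for $C_1, C_2$ exhibited in Theorem 3.4 contain the slice modules $T^-$ and $T^+$ as summands; under the identification of $\Sigma^+_B$ with $\Sigma^-_{C_1}$ (and similarly at the second gluing), the slice modules appear as direct summands of the appropriate Auslander generators, so the hypothesis of Corollary 4.3 is satisfied, giving $\repdim \overline{B}=3$. The main obstacle will be the careful verification that the subcategories $C_1,C_2$ are genuine domestic quasitube algebras in the restricted sense of Section 3 (in particular, that every projective and every injective lying in the quasitube components is projective-injective in $\overline{B}$); this reduces to a direct inspection of the branches created by the $\D B$-bimodule, together with the fact that $B$ is domestic of euclidean type.
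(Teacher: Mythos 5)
Your overall strategy is the one the paper uses: decompose $\ind \overline{B}$ into the beginning of $\ind B$ followed by two families of quasitubes with transjective pieces in between, take for $C_1,C_2$ algebras attached to these families, check the finite gluing conditions, and conclude with Proposition 4.2/Corollary 4.3 using that the slice modules occur as summands of the Auslander generators of Theorem 3.4. However, the two steps that carry the actual content are asserted rather than proved. First, your definition of $C_1$ (``$B$ together with the vertices of $B'$ supporting certain injectives whose socles match the $B$-projectives in the $\D B$-structure'') is vague and not justified, and the claim that it is a branch extension of a tame concealed quotient of $B$ with all inserted projectives projective-injective --- i.e.\ a domestic quasitube algebra in the restricted sense of Section 3 --- is exactly what you defer to ``direct inspection''. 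The paper avoids this entirely: it defines $C_i$ as the \emph{support algebra} of the quasitube family $\cal C_i$ of $\Gamma(\mod \overline{B})$ and then invokes the characterisation recorded after Theorem 3.1 (from Malicki--Skowro\'nski) that a domestic algebra with a separating family of quasitubes is a domestic quasitube algebra; no branch analysis is needed, and the restricted projective-injectivity condition comes for free.

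Second, the verification of (FG1)--(FG3) hinges on knowing the precise shape of $\Gamma(\mod \overline{B})$: postprojective component and tubes of $B$, a transjective component $\cal X_1$ containing finitely many projective-injectives and admitting the slice $\Sigma$ as left section, then $\cal C_1$, a second transjective component $\cal X_2$, then $\cal C_2$, and a preinjective component with a left section isomorphic to $\Sigma$, all with the appropriate separation and cofiniteness properties needed for (FG2)(3). You simply assert that the modules beyond $\Sigma^+_B$ and before $\Sigma^-_{C_1}$ ``merge into a single glued component'', but this is the decisive structural fact; the paper imports it from the known description of $\Gamma(\mod \overline{B})$ in \cite{A2} (equivalently, from the canonical decomposition of $\Gamma(\mod \hat{B})$ in Theorem 5.3, restricted to a fundamental domain), and then chooses $\Sigma^-_{C_1}$ as any right section of $\cal X_1$ succeeding the projective-injectives, and similarly in $\cal X_2$. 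Without establishing (or citing) this description, your check of the gluing conditions, and hence the application of Corollary 4.3, is incomplete; the final step about slice modules being summands of the Auslander generators is fine once that structure is in place.
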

\begin{proof}
   We only prove (a), because the proof of (b) is similar.
   We use the description of $\Gamma(\mod \overline B)$ given in~\cite{A2}.
   Let $\Sigma$ be a complete slice in $\Gamma(\mod B)$. Then $\Sigma$ embeds as a section in the stable part of the Auslander-Reiten
   quiver of the trivial extension $T(B)=B\ltimes\D B$ of $B$
   by its minimal injective cogenerator bimodule.
   Then, an exact fundamental domain for $\Gamma(\mod T(B))$ inserts in between the predecessors and the successors of $\D B$
   in $\Gamma(\mod B)$ to yield $\Gamma(\mod \overline B)$. Thus $\Gamma(\mod \overline B)$ has the following shape
   \begin{center}
      \includegraphics[width=0.7\linewidth]{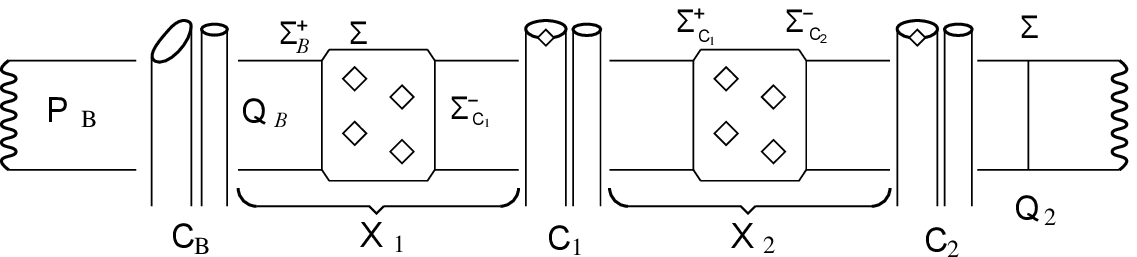}
   \end{center}
   where the diamonds represent possible occurrence of projective-injectives.
   It thus consists of:
   \begin{enumerate}[\indent(a)]
      \item a postprojective component $\cal P_B$ which is the postprojective component of $\Gamma(\mod B)$;
      \item a separating tubular family of (extended) tubes $\cat C_B$, which are the tubes in $\Gamma(\mod B)$;
      \item a transjective component $\cal X_1$  generally containing projective-injectives and having $\Sigma$ as left section.
      The predecessors of $\Sigma$ in $\cat X_1$ are exactly the predecessors of $\Sigma$ in the preinjective component of $\Gamma(\mod B)$;
      \item a separating family of quasitubes $\cat C_1$;
      \item a second transjective component $\cal X_2$ also generally containing projective-injectives;
      \item another separating family of quasitubes $\cat C_2$;
      \item a preinjective component $\cat Q_2$ having a left section isomorphic to $\Sigma$. The successors of this left section in $\cat Q_2$
      are exactly the successors of $\Sigma$ in the preinjective component of $\Gamma(\mod B)$.
   \end{enumerate}
   Thus we have $\ind\overline B=\cat P_B\vee\cat C_B\vee \cat X_1\vee\cat C_1\vee\cat X_2\vee \cat C_2\vee \cat Q_2$.

   Let $C_1$ be the support algebra of the family $\cat C_1$ of quasitubes inside $\overline B$.
   That is, $C_1$ is the full subcategory of $\overline B$ consisting of those $x\in\overline B_0$
   such that there exists a module $M$ in $\cat C_1$ satisfying $M(x)\ne 0$.
   Because every $C_1$-module is also a $\overline B$-module, $\cat C_1$ is a separating family of quasitubes in $\mod C_1$ and moreover
   $C_1$ is domestic.
   Therefore, $C_1$ is a domestic quasitube algebra.
   Similarly, the support algebra $C_2$ of the family $\cat C_2$ of quasitubes is a domestic quasitube algebra.

   We now show how to realise $\overline B$ as a finite gluing of $B$, $C_1$ and $C_2$.
   As observed, there exist a finite number of projective-injectives in the component $\cat X_1$.
   First, let $\Sigma_B^+$ denote the left section isomorphic to $\Sigma$ in $\cat X_1$.
   It precedes all the projective-injectives.
   Let next $\Sigma^-_{C_1}$ be any right section in $\cat X_1$ which succedes these projective-injectives.
   Similarly, let $\Sigma_{C_1}^+$ and $\Sigma_{C_2}^-$ be respectively a left section and a right section in $\cat X_2$,
   preceding and following the projective-injectives in that component.
   Looking at the definition of finite gluing now shows that $\overline B=B*C_1*C_2$.

   As for the last assertion, it follows directly from the main
   result of~\cite{APT}, theorem 3.4 and corollary 4.3, that $\repdim \overline B=3$.
\end{proof}

The preceding proposition can easily be generalised.
For $n\ge2$
    \[ B^{(n)} = \left( \begin{array}{ccccc}
                      	B_1 &  0  &        &   0     &	\\
                      	E_1 & B_2 &        &  		  &	\\
                      	    & E_2 & B_3    & 		  &	\\
                            &     & \ddots &	\ddots  &	\\
                      	 0  &     &        &  E_{n-1}&   B_n
                      \end{array}\right)   \]
be the lower triangular matrix algebra, where $B_i = B$ and $E_i = \D B$ for each $i$.
The addition is the usual addition of matrices and the multiplication induced from
the bimodule structure of $\D B$ and the morphisms $\D B \otimes_B \D B \To 0$.
This algebra is called the \emph{$n$-replicated algebra} of $B$.
It is easily shown, as in the proposition above, that, if $B$ is tilted of euclidean type,
then $\repdim B^{(n)} = 3$ for any $n$.

\section{Orbit Algebras of Repetitive Categories}
\subsection{Repetitive categories}

The selfinjective algebras of euclidean type are orbit algebras of repetitive categories. In this section, we recall the definitions and results on these algebras that are needed in the proof of our main theorem.

Let $B$ be a basic and connected algebra and $\set{e_1,\cdots,e_n}$ a complete set of primitive orthogonal idempotents for $B$.

Following~\cite{HW}, the \emph{repetitive category} $\hat B$ of $B$ is the category
having as objects $e_{m,i}$ with $(m,i) \in \Ent \times \set{1,\cdots,n}$
and where the morphism spaces are defined by
        \[\hat B(e_{r,i},e_{s,j}) = \begin{cases}
                                           e_jBe_i  & \text{if} s = r \\
                                         \D(e_iBe_j) & \text{if} s = r+1 \\
                                                0   & \text{otherwise.}
                                    \end{cases} \]

The repetitive category is a connected locally bounded selfinjective category.
A group $G$ of automorphisms of $\hat{B}$ is called \emph{admissible} if $G$ acts freely on
the objects of $\hat{B}$ and has finitely many orbits.

We define the Nakayama automorphism $\nu_{\hat B}$ of $\hat B$ to be the automorphism defined on the objects by
  \[\nu_{\hat B}(e_{m,i}) = e_{m,i+1}    \]
for every $(m,i)\in \Ent\times\set{1,\dotsc,n}$ and in the obvious way on the morphisms.
Then the infinite cyclic group $(\nu_{\hat B})$ generated by $\nu_{\hat B}$
is an admissible group of  automorphisms of $\hat B$.

Let now $\varphi$ be an automorphism of the category $\hat B$.
Thus $\varphi$ is said to be:
\begin{enumerate}[\indent(a)]
   \item \emph{positive} if, for each $(m,i)\in\Ent\times\set{1,\dotsc,n}$, we have $\varphi(e_{m,i})=e_{p,j}$ for some $p\ge m$
   and $j\in\set{1,2,\dotsc,n}$.
   \item \emph{rigid} if, for each $(m,i)\in\Ent\times\set{1,\dotsc,n}$, we have $\varphi(e_{m,j})=e_{m,j}$ for some  $j\in\set{1,2,\dotsc,n}$.
   \item \emph{strictly positive} if it is  positive but not rigid.
\end{enumerate}
For instance, $\nu_{\hat B}$ is a strictly positive automorphism of $\hat B$.

The following structure theorem for admissible groups of automorphisms is a consequence of the results of~\cite{BS1,BS2,S1}.

\begin{thm}
   Let $B$ be a tilted algebra of euclidean type and $G$ be a torsion-free admissible group of automorphisms of $\hat B$. Then $G$ is an infinite cyclic group generated by a strictly positive automorphism of one of the forms
   \begin{enumerate}[\indent(a)]
      \item $\sigma\nu^k_{\hat B}$ for a rigid automorphism $\sigma$ and some $k\ge 0$, or
      \item $\mu\varphi^{2k+1}$ for a rigid automorphism $\mu$, a strictly positive automorphism $\varphi$ such that $\varphi^2=\nu_{\hat B}$ and some $k\ge 0$.\qedhere
   \end{enumerate}
\end{thm}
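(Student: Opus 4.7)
The plan is to assemble the results of~\cite{BS1,BS2,S1} characterising admissible automorphism groups of the repetitive category of a tilted algebra of euclidean type. First, I would establish that $G$ must be infinite cyclic. The key input is the classification in~\cite{S1}: for $B$ tilted of euclidean type, every torsion-free admissible group of automorphisms of $\hat B$ produces an orbit algebra $\hat B/G$ that is a representation-infinite domestic selfinjective algebra of euclidean type, and the corresponding groups are all infinite cyclic. Thus one reduces the problem to determining the generator of $G$.

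Second, let $g$ be a generator of $G$. I would analyse the action of $g$ on the objects $e_{m,i}$ by tracking its effect on the $\Ent$-component. Because $g$ permutes the objects and acts freely with finitely many orbits, and because the $\Ent$-grading of $\hat B$ must be respected up to a uniform shift, $g$ is either positive or negative (it uniformly shifts the $\Ent$-index in one direction, after composing with a permutation of $\{1,\dotsc,n\}$). Replacing $g$ by $g^{-1}$ if necessary, we may assume $g$ is positive; freeness of the action prevents $g$ from fixing any object, so $g$ cannot be rigid, and hence $g$ is strictly positive.

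Third, I would invoke the structure theory of the monoid of strictly positive automorphisms of $\hat B$ modulo rigid automorphisms, as developed in~\cite{BS1,BS2}. This quotient is cyclic, and it is generated either by the class of $\nu_{\hat B}$ itself (yielding the decomposition $g=\sigma\nu_{\hat B}^{k}$ of form (a)) or, in specific cases where the quiver of $B$ carries a symmetry compatible with the Nakayama permutation, by the class of a strictly positive automorphism $\varphi$ satisfying $\varphi^{2}=\nu_{\hat B}$. In the latter case, any generator decomposes as $g=\mu\varphi^{\ell}$ with $\mu$ rigid; when $\ell=2k$ is even, $\varphi^{2k}=\nu_{\hat B}^{k}$ puts $g$ back into form (a), so genuinely new generators require odd exponents $\ell=2k+1$ as in form (b).

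The main obstacle is this last dichotomy: proving that the quotient monoid of positive automorphisms modulo rigid ones is cyclic, and determining precisely when a square root $\varphi$ of $\nu_{\hat B}$ exists, requires the delicate case-by-case analysis of~\cite{BS1,BS2} across the euclidean types $\widetilde{\mathbb A}_n$, $\widetilde{\mathbb D}_n$, $\widetilde{\mathbb E}_6$, $\widetilde{\mathbb E}_7$, $\widetilde{\mathbb E}_8$, together with a classification of those quiver automorphisms of the underlying tilted algebra that intertwine with the Nakayama automorphism. Once these two structural facts are in hand, the stated form of $g$ follows immediately, with the two alternatives (a) and (b) corresponding respectively to whether the square root $\varphi$ is absent or present.
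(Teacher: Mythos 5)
Your proposal takes essentially the same route as the paper, which offers no independent argument and states the theorem as a direct consequence of the classification results of~\cite{BS1,BS2,S1} --- precisely the references to which you defer the substantive points (infinite cyclicity of $G$, positivity of a generator up to inversion, and the existence or non-existence of a strictly positive $\varphi$ with $\varphi^2=\nu_{\hat B}$, settled type by type). One minor slip worth fixing: with the paper's definition a rigid automorphism preserves each level $m$ but need not fix any object, so ``freeness forbids fixed objects, hence the generator is not rigid'' is not literally valid; instead note that a rigid generator would have all orbits contained in the finite levels (equivalently, some power would fix objects), contradicting admissibility and the free action of an infinite cyclic group --- an easy repair that does not change your approach.
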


Note that, if $B$ is tilted of type $\tilde{\mathbb{E}}$, then $\hat B$ does not admit
a strictly positive automorphism $\varphi$ such that $\varphi^2=\nu_{\hat B}$, see~\cite{LS}.
We refer to~\cite{BS1,BS2} for a complete description of the repetitive categories $\hat B$
of the tilted algebras $B$ of types $\tilde{\mathbb A}$ and $\tilde{\mathbb D}$ with $\varphi^2=\nu_{\hat B}$
for some strictly positive automorphism $\varphi$ of $\hat B$.

\subsection{Orbit categories and the Auslander-Reiten quiver of a repetitive category}

We now define orbit categories~\cite{G}.
Let $B$ be a basic and connected algebra and $G$ be an admissible group of automorphisms of $\hat B$.
The \emph{orbit category} $\hat B/G$ has as objects the $G$-orbits of objects of $\hat B$.
Given $a,b\in\left(\hat B/G\right)_0$, the morphism space $\hat B/G(a,b)$ is defined as
   \[\hat B/G(a,b)=\set{(f_{yx})\in\prod_{(x,y)\in a \times b} \hat B/G(x,y)~~ \mid~~ g\big(f_{yx}\big)=f_{g(x),g(y)},\;\text{for all $g\in G$, $x\in a$, $y\in b$}}.\]
In this situation, there exists a natural functor $F:\hat B\to\hat B/G$ called the associated \emph{Galois covering functor}, which assigns to any object $x\in\hat B_0$
its $G$-orbit $Gx$, and maps a morphism $\xi\in\hat B(x,y)$ to the family $F\xi$ such that
   \[ (F\xi)_{h(y),g(x)}=\begin{cases}
                           g(\xi)&\text{if $h=g$}\\
                           0&\text{if $h\ne g$}.
                        \end{cases}\]
Moreover, the functor $F$ induces $\kk$-linear isomorphisms
   \[  \bigoplus_{Fx=a}\hat B(x,y)\cong \hat B/G(a,Fy)\quad ;\quad \bigoplus_{Fy=b}\hat B(x,y)\cong \hat B/G(Fx,b)\]
   for all $x,y\in \hat B_0$; $a,b\in\left(\hat B/G\right)_0$.
Because $G$ is admissible, $\hat B/G$ is a category with finitely many objects and we may (and shall) identify it to the finite dimensional
algebra $\bigoplus\left(\hat B/G \right)$ which is the sum of all the morphism spaces in $\hat B/G$.

For instance, the orbit algebra of $\hat B$ by the (admissible) automorphism group $(\nu_{\hat B})$ generated by the Nakayama automorphism $\nu_{\hat B}$ is the
trivial extension $T(B)$ of $B$ by $\D B$.

Now we consider $\hat B$-modules.
We denote by $\mod \hat B$ the category of all the contravariant functors from $\hat B$ to $\mod \kk$, which we call finite dimensional $\hat B$-modules.
An admissible group $G$ of automorphisms of $\hat B$ also acts on $\mod\hat B$ by
   \[ {^g\!M}=Mg^{-1}\]
   for all $\hat B$-modules $M$ and all $g\in G$.

The Galois covering $F: \hat B \To \hat B / G$ induces
the so-called \emph{pushdown functor} $F_{\lambda}: \mod\hat B \To \mod\hat B/G$ such that
   \[ \left(F_\lambda M\right)(a)=\bigoplus_{x\in a}M(x)\]
for all $\hat B$-modules $M$ and all $a\in(\hat B/G)_0$, see~\cite{BG,G}.

Assume that the group $G$ is torsion-free.
Because of~\cite{G}, the functor $F_\lambda$ preserves almost split sequences and induces an embedding from the set of
$G$-orbits $\left(\ind\hat B\right)/G$ of isoclasses of indecomposable $\hat B$-modules into the set $\ind\left(\hat B/G\right)$ of isoclasses
of indecomposable $\hat B/G$-modules.

The density theorem proved in~\cite{DS1,DS2} says that $F_\lambda$ is dense whenever the category $\hat B$ is \emph{locally support-finite},
that is, for each $a\in\hat B_0$, the full subcategory $\hat B_a$ of $\hat B$, given by the supports of all $M$ in $\ind \hat B$ such that $M(a)\ne 0$, is finite.

If $F_\lambda$ is dense, then it induces an isomorphism between the orbit quiver $\Gamma(\mod \hat B)/G$ of $\Gamma(\mod \hat B)$ under the action of $G$ and the Auslander-Reiten quiver $\Gamma \left( {\mod\left( \hat B/G \right)} \right)$ of $\hat B/G$.

Moreover, we are able to describe the Auslander-Reiten quiver of the repetitive category of a tilted algebra of euclidean type~\cite{ANS}.

\begin{thm}
   Let $B$ be a tilted algebra of euclidean type $\vec\Delta$.
   Then the Auslander-Reiten quiver of  $\hat B$ is of the form
        $\Gamma(\mod{\hat B}) = \bigvee_{q\in\Ent}(\cal{X}_q \vee \cal{C}_q)$
  where for each $q \in \Ent$,
    \begin{enumerate}[\indent(a)]
        \item $\cal{X}_q$ is an acyclic component whose stable part is of the form $\Ent\vec{\Delta}$,
        \item $\cal{C}_q$ is a family $(\cal{C}_{q,\lambda})_{\lambda \in \mathbb{P}_1(\kk)}$ of quasitubes,
        \item $\nu_{\hat B}(\cal{X}_q) = \cal{X}_{q+2}$ and $\nu_{\hat B}(\cal{C}_q) = \cal{C}_{q+2}$,
        \item $\cal{X}_q$ separates             $\bigvee_{p < q }(\cal{X}_p \vee \cal{C}_p)~~ $
                            from $~~\cal{C}_q \vee \left(\bigvee_{p > q }(\cal{X}_p \vee \cal{C}_p)\right)$,
        \item $\cal{C}_q$ separates $\bigvee_{p < q}(\cal{X}_p \vee \cal{C}_p)  \vee \cal{X}_q~~$
                           from    $~~\bigvee_{p > q}(\cal{X}_p \vee \cal{C}_p)$.\qedhere
    \end{enumerate}
\end{thm}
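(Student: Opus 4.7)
The plan is to derive the claimed decomposition of $\Gamma(\mod\hat B)$ from the well-understood structure of $\mod B$, together with the description of $\hat B$ as an infinite gluing of shifted copies of $B$ via the bimodules $\D B$. I would begin by recalling that, since $B$ is a tilted algebra of euclidean type $\vec\Delta$, the AR quiver $\Gamma(\mod B)$ decomposes into a unique postprojective component, a separating $\mathbb{P}_1(\kk)$-family of (possibly ray- or coray-enlarged) tubes, and a unique preinjective component, with a connecting component containing a complete slice $\Sigma$ whose stable part is of type $\Ent\vec\Delta$.

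For each $q\in\Ent$, I identify the full convex subcategory $B_q$ of $\hat B$ supported on $\set{e_{q,i}\mid 1\le i\le n}$; this is isomorphic to $B$, the morphisms from $B_q$ to $B_{q+1}$ are encoded by $\D B$, and there are no morphisms from $B_{q+1}$ to $B_q$ in $\hat B$. The quasitube family $\cal C_q$ is then built by starting from the separating family of tubes of $\mod B_q$ and enlarging it with those indecomposable projective-injective $\hat B$-modules whose supports straddle $B_q$ and $B_{q+1}$; the admissible operations underlying Section 3 show that the result is still a separating $\mathbb{P}_1(\kk)$-family of quasitubes, giving (b). The acyclic component $\cal X_q$ is assembled by gluing the preinjective component of $\mod B_q$ to the postprojective component of $\mod B_{q+1}$ through the projective-injectives and the modules on the connecting slice, yielding an acyclic component whose stable part is of type $\Ent\vec\Delta$, which gives (a).

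The separating properties (d) and (e) would then follow from the observation that any path in $\ind\hat B$ either stays within one bundle $\cal X_q\vee\cal C_q$ or proceeds to $\cal X_{q+1}\vee\cal C_{q+1}$, together with the vanishing $\Hom_{\hat B}(M,N)=0$ whenever the support of $N$ lies strictly to the left of that of $M$ in the repetitive chain. The equivariance (c) is a direct translation of the shift that $\nu_{\hat B}$ induces on the objects of $\hat B$: since each pair $(\cal X_q,\cal C_q)$ involves two consecutive copies $B_q$ and $B_{q+1}$, advancing by one copy under the Nakayama automorphism shifts the indexing $q$ by two.

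An alternative route would be to use Galois coverings: one checks that $\hat B$ is locally support-finite (using that $B$ is tilted of euclidean type, hence representation-infinite but domestic), so by the density theorem of~\cite{DS1,DS2} the pushdown $F_\lambda:\mod\hat B\to\mod T(B)$ is dense and induces an isomorphism $\Gamma(\mod\hat B)/(\nu_{\hat B})\cong\Gamma(\mod T(B))$. Since the AR quiver of the trivial extension $T(B)$ consists of one transjective component together with a separating $\mathbb{P}_1(\kk)$-family of quasitubes, unfolding this quotient through the infinite cyclic covering produces the doubly infinite sequence $\bigvee_{q\in\Ent}(\cal X_q\vee\cal C_q)$. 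I expect the main obstacle in either approach to be the verification of local support-finiteness and the precise separating behaviour around the projective-injective modules that bridge successive copies of $B$.
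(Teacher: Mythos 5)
You should first note that the paper does not actually prove this statement: it is quoted from \cite{ANS} (and relies on the reflection analysis recalled in the subsequent theorem), so there is no internal proof to compare with; your sketch must therefore stand on its own, and it contains a genuine structural error.

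The gap is in your global picture of $\Gamma(\mod \hat B)$. You build \emph{one} quasitube family per copy $B_q$ (the tubes of $\mod B_q$ enlarged by straddling projective-injectives) and \emph{one} acyclic component per copy (the preinjective component of $B_q$ glued to the postprojective component of $B_{q+1}$). In fact there are \emph{two} quasitube families and \emph{two} acyclic components per Nakayama period, which is exactly why part (c) asserts $\nu_{\hat B}(\cat C_q)=\cat C_{q+2}$ rather than $\cat C_{q+1}$; your own justification of (c) ("each pair involves $B_q$ and $B_{q+1}$, so one copy shifts $q$ by two") is inconsistent, since with your construction a shift by one copy would advance the index by one. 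The missing family in each period consists of indecomposables whose supports genuinely straddle two consecutive copies and which are neither $B_q$-modules nor projective-injective: already for the Kronecker algebra the band modules supported on the "dual" Kronecker subcategory $\{e_{m,2},e_{m+1,1}\}$ form such a family, and in the paper's Section 6 example the quasitube support algebras within one $\nu_{\hat B}$-period are $T_1^+B$ and $\varphi(T_1^+B)$, the latter straddling halves of two copies. Relatedly, the preinjective modules of $B_q$ and the postprojective modules of $B_{q+1}$ do \emph{not} lie in the same component: they sit in consecutive acyclic components separated by the straddling quasitube family (compare the duplicated algebra in Proposition 4.4, where $\overline B=B*C_1*C_2$ exhibits precisely these extra families $\cat C_1,\cat C_2$ between the tubes of the two copies of $B$). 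Your alternative route repeats the same miscount ("one transjective component together with a separating family of quasitubes" for $T(B)$ --- it has two of each, $T(B)$ being $2$-parametric) and is moreover essentially circular, since the structure of $\Gamma(\mod T(B))$ is obtained in \cite{ANS} from that of $\Gamma(\mod\hat B)$ via covering theory, not the other way around. A correct proof has to produce the second, straddling, tame concealed convex subcategory in each period (via the reflections $S_i^+$ and one-point extensions $T_i^+$ as in the next theorem), which is precisely the work your sketch omits.
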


The description of the previous theorem is said to be the \emph{canonical decomposition} of $\Gamma(\mod \hat B)$.

\subsection{Structure of the repetitive category}

We need one more concept from~\cite{HW}.
Let $B$ be a triangular algebra, then $\hat B$ is also triangular.
We identify $B$ with the full subcategory of $\hat B$ with object set $\set{e_{i,0}\mid i\in\set{1,\dotsc, n}}$.
Let now $i$ be a sink in $Q_B$.
The \emph{reflection} $S_i^+B$ of $B$ at $i$ is the full subcategory of $\hat B$ given by the objects $e_{0,j}$ with $j\in\set{1,\dotsc, n}\setminus\set{i}$ and
$e_{1,i}=\nu_{\hat B}(e_{0,i})$.
In this case, the quiver $\sigma_i^+ Q_B=Q_{S_i^+B}$ of $S_i^+B$ is called the \emph{reflection} of $Q_B$ at $i$.
Observe that $\hat B=\widehat{S_i^+B}$.

A \emph{reflection sequence of sinks} is a sequence $i_1,\dotsc,i_t$ of points in $Q_B$ such that, for each $s\in\set{1,\dotsc,t}$, the point $i_s$
is a sink in the quiver $\sigma_{i_{s-1}}^+\cdots \sigma_{i_1}^+Q_B$.

Finally, for a sink $i$ in $Q_B$, we denote by $T_i^+B$ the full subcategory of $\hat B$ having as objects those of $B$ and $e_{1,i}=\nu_{\hat B}(e_{0,i})$.
We note that $T_i^+B$ is isomorphic to the one-point extension $B[I(i)_B]$ of $B$ by the indecomposable injective $B$-module $I(i)$ at the point $i$.

We are now able to state the following result~\cite{ANS}.

\begin{thm}
   Let $B$ be a tilted algebra of euclidean type $\vec{\Delta}$ and let
      \[ \Gamma(\mod \hat B)=\bigvee_{q\in\Ent}(\cat X_q\vee \cat C_q)\]
    be the canonical decomposition of $\Gamma(\mod \hat B)$.
    For any $q\in\Ent$, we have:
    \begin{enumerate}[\indent(a)]
       \item The support algebra $B_q$ of $\cat{C}_q$ is a domestic quasitube algebra, which is a quasitube enlargement of a tame concealed full convex subcategory $C_q$ of $\hat B$.
       \item $\Gamma(\mod B_q)=\cat P^{B_q}\vee \cat C_q\vee\cat Q^{B_q}$, where $\cat P^{B_q}$ and $\cat Q^{B_q}$ are respectively a postprojective and a preinjective component, both of euclidean type, and $\cat C_q$ separates $\cat P^{B_q}$ from $\cat Q^{B_q}$.
       \item The support algebra $B_q^-$ of $\cat P^{B_q}$ is a domestic tubular coextension of $C_q$ and the support algebra $B_q^+$ of $\cat Q^{B_q}$ is a domestic tubular extension of $C_q$.
       \item There is a reflection sequence of sinks $i_1,\dotsc,i_r$ of $Q_{B_q^-}$ (possibly empty) such that $B_q^+=S_{i_r}^+\cdots S_{i_1}^+B_q^-$ and
             $B_q=T_{i_r}^+\cdots T_{i_1}^+B_q^-$.
       \item There is a reflection sequence of sinks $j_1,\dotsc,j_s$ of $Q_{B_q^+}$ (nonempty) such that $B_{q-1}^-=S_{j_s}^+\cdots S_{j_1}^+B_{q-1}^+$ and
       $D_q=T_{j_s}^+\cdots T_{j_1}^+B_{q-1}^+$ is the support algebra of $\cat X_q$. Hence, $\cat X_q$ contains at least one projective module.
       \item There is a cofinite full translation subquiver $\cat{X}_q^-=(-\Nat)\vec{\Delta}$ of $\cat P^{B_q}$ which is a full translation subquiver of $\cat X_q$ closed under successors.
       \item There is a cofinite full translation subquiver $\cat{X}_q^+=\Nat\vec{\Delta}$ of $\cat Q^{B_q}$ which is a full translation subquiver of $\cat X_{q+1}$ closed under predecessors.
      \end{enumerate}
      In particular, $\hat B$ is locally support-finite.  \qed
\end{thm}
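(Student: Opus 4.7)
The plan is to leverage the canonical decomposition of $\Gamma(\mod \hat B)$ recorded in the previous theorem, together with the $\nu_{\hat B}$-equivariance $\nu_{\hat B}(\cat X_q) = \cat X_{q+2}$ and $\nu_{\hat B}(\cat C_q) = \cat C_{q+2}$. Since the Nakayama automorphism carries $B_q$ to $B_{q+2}$ and its various sub- and quotient-categories to their shifts, it suffices to establish assertions (a)--(g) for the representatives $q=0$ and $q=1$; the remaining cases then follow by translation.

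For parts (a)--(c), I would first define $B_q$ as the full convex subcategory of $\hat B$ supporting $\cat C_q$, and then identify inside $B_q$ a tame concealed subcategory $C_q$ whose stable tubes give rise to $\cat C_q$ after insertion of the projective-injectives of $\hat B$ that lie in $\cat C_q$. These insertions correspond exactly to the admissible operations ad 1), ad 1*), ad 2), ad 2*) performed on the tubes of $C_q$, showing that $B_q$ is a quasitube enlargement; domesticity is inherited from the tameness of $\hat B$. The structure theorem for domestic quasitube algebras (Theorem 3.1) then yields the decomposition $\Gamma(\mod B_q) = \cat P^{B_q} \vee \cat C_q \vee \cat Q^{B_q}$, and identifies $B_q^-$ and $B_q^+$ as a domestic tubular coextension and a domestic tubular extension of $C_q$ of euclidean type respectively.

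Parts (d) and (e) would be established by an explicit reflection calculus inside $\hat B$. Starting from $B_q^-$, I would enumerate the projective-injectives of $\cat C_q$ in an order $i_1,\dotsc,i_r$ compatible with a reflection sequence of sinks, and show inductively that the one-point extension $T_{i_k}^+$ inserts the corresponding indecomposable injective on top of the current configuration, while the reflection $S_{i_k}^+$ swaps the coextension side with the extension side; the equality $B_q^+ = S_{i_r}^+\cdots S_{i_1}^+ B_q^-$ then follows by comparing bound quivers inside $\hat B$. The analogous step for (e) must handle the passage from $\cat Q^{B_q}$ across $\cat X_{q+1}$ into $\cat P^{B_{q+1}}$, and nonemptiness of the sequence $j_1,\dotsc,j_s$ reflects that each transjective component $\cat X_{q+1}$ necessarily contains a projective module, which follows from selfinjectivity of $\hat B$ combined with the periodic distribution of indecomposable projectives across the components. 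This reflection bookkeeping is the main technical obstacle, requiring a careful case analysis according to the Dynkin type $\tilde{\mathbb A}$, $\tilde{\mathbb D}$, or $\tilde{\mathbb E}$, and appealing to the descriptions of automorphisms of $\hat B$ collected in Theorem 5.1.

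Finally, for (f) and (g), the cofinite subquivers $\cat X_q^\pm$ are obtained by removing from $\cat P^{B_q}$ and $\cat Q^{B_q}$ the finitely many modules that do not lie in the stable $\Ent\vec\Delta$ part of $\cat X_q$ and $\cat X_{q+1}$; the closure under successors and predecessors follows from the separating role of the families $\cat C_q$. Local support-finiteness is then a direct corollary: any object $a \in \hat B_0$ is contained in only a bounded number of consecutive support algebras among the $B_q$ and $D_q$, each of which is finite-dimensional, so the full subcategory $\hat B_a$ generated by the supports of indecomposables $M$ with $M(a)\ne 0$ is itself finite.
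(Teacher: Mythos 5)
You should first note that the paper does not prove this statement at all: it is quoted verbatim from~\cite{ANS} (together with Theorem~5.2), the \qed{} marking it as an imported result. So the only meaningful question is whether your plan would itself amount to a proof, and here there are genuine gaps. First, the assertion that ``domesticity is inherited from the tameness of $\hat B$'' is not an argument: tame does not imply domestic, and the tameness/domesticity of $\hat B$ for $B$ tilted of euclidean type is itself a nontrivial theorem, essentially part of the structure theory you are trying to establish. Second, the heart of part (a) -- the existence of a tame concealed full convex subcategory $C_q$ of $\hat B$ and the fact that the insertion of the projective-injectives of $\cat C_q$ is realized by the admissible operations ad\,1)--ad\,2*) -- is asserted rather than proved; this is precisely the content of~\cite{ANS}, and it is also the hypothesis you need before Theorem~3.1 can be invoked for (b) and (c).

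Third, your treatment of (e) is circular with respect to the theorem's own logic: the statement derives ``$\cat X_q$ contains at least one projective'' as a consequence of the nonemptiness of the reflection sequence $j_1,\dotsc,j_s$, whereas you justify nonemptiness by claiming that every acyclic component $\cat X_q$ contains a projective ``by selfinjectivity and the periodic distribution of projectives'' -- but determining how the indecomposable projectives of $\hat B$ are distributed among the components $\cat X_q$ and $\cat C_q$ is exactly the reflection bookkeeping you defer. Indeed, you explicitly label the reflection calculus of (d)--(e) ``the main technical obstacle'' without carrying it out, so what you have is a plausible outline of the Assem--Nehring--Skowro\'nski strategy (support algebras, concealed cores, reflections $S_i^+$ and one-point extensions $T_i^+$, and local support-finiteness from the finite overlap of the algebras $B_q$, $D_q$ -- this last step is fine), not a proof. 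To close the argument you would either have to reproduce the analysis of~\cite{ANS} in detail or, as the paper does, simply cite it.
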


\section{Selfinjective Algebras of Euclidean Type}

\begin{definition}
    A selfinjective algebra $A$ is said to be of \emph{euclidean type} if
    there exist a tilted algebra $B$ of euclidean type and
    an admissible infinite cyclic group $G$ of automorphisms of $\hat B$
    such that $ A = \hat B / G$.
\end{definition}

Examples of selfinjective algebras of euclidean type are provided by trivial extensions
of tilted algebras of euclidean type.

It has been proven in~\cite{S1} that
a selfinjective algebra is of euclidean type if and only if it is
representation-infinite, domestic
and admits a simply connected Galois covering (in the sense of~\cite{AS2}).

We are now able to prove the main result of the paper.

\begin{thm}
    Let $A$ be a selfinjective algebra of euclidean type. Then $\repdim A = 3$.
\end{thm}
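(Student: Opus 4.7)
The plan is to exploit the Galois covering $F\colon\hat B\to A=\hat B/G$ to transport an Auslander generator from the repetitive category $\hat B$ down to $A$. Since $A$ is representation-infinite, it suffices to show $\repdim A\le 3$. By Theorem 5.1, $G=\gen{g}$ is infinite cyclic generated by a strictly positive automorphism; by Theorem 5.3, $\hat B$ is locally support-finite, so the pushdown functor $F_\lambda\colon\mod\hat B\to\mod A$ is dense on indecomposables and induces an isomorphism $\Gamma(\mod\hat B)/G\cong\Gamma(\mod A)$. In particular, every indecomposable $A$-module is of the form $F_\lambda(X)$ for some indecomposable $\hat B$-module $X$, and the action of $g$ cyclically permutes the pairs $(\cat X_q,\cat C_q)$ of the canonical decomposition from Theorem 5.2 with finitely many orbits.

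Next, I would construct a candidate generator in $\mod \hat B$. For each $q\in\Ent$, Theorem 5.3 identifies the support algebra $B_q$ of $\cat C_q$ as a domestic quasitube algebra, and Theorem 3.4 produces an explicit Auslander generator $M_q$ of $\mod B_q$ of the form $B_q\oplus \D B_q\oplus T_q^-\oplus T_q^+\oplus\D B_q^-$. Since $B_q$ is a full convex subcategory of $\hat B$, each $M_q$ lifts to an $\hat B$-module; set $\tilde M=\bigoplus_{q\in\Ent}M_q$. Because $g$ permutes the summands up to isomorphism and $\hat B$ is locally support-finite, the pushdown $M_A:=F_\lambda(\tilde M)$ is finite-dimensional over $A$. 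I would then set the candidate Auslander generator to be $\overline M_A:=A\oplus\D A\oplus M_A$.

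The heart of the argument is to show that every indecomposable $\hat B$-module $X$ admits an $\add\tilde M$-approximating sequence in $\mod\hat B$. If $X\in\cat C_q$, apply Theorem 3.4 directly over $B_q$. If $X$ lies in a transjective component $\cat X_q$, then by Theorem 5.3 the component $\cat X_q$ is exactly the glued component between the quasitube algebras $B_{q-1}$ and $B_q$ in the sense of Definition 4.1, with the reflection-of-sinks presentation ensuring that the slice modules $\Sigma_{B_{q-1}^+}$ and $\Sigma_{B_q^-}$ appear as direct summands of the Auslander generators on the appropriate side; hence Proposition 4.2 and Corollary 4.3 produce the required $\add(M_{q-1}\oplus M_q)$-approximating sequence. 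Pushing these sequences down via the exact functor $F_\lambda$ and using density to lift test morphisms from $\mod A$ to $\mod\hat B$, one obtains an $\add\overline M_A$-approximating sequence for every indecomposable $A$-module. By Lemma 2.4 this yields $\repdim A\le 3$, and the theorem follows.

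The main obstacle is Step 3, the construction and verification of the approximating sequences on the transjective components $\cat X_q$ sitting between two distinct quasitube algebras $B_{q-1}$ and $B_q$. Two delicate points must be controlled: first, the reflection-sink presentation from Theorem 5.3(d)--(e) must be matched precisely with the gluing hypotheses of Definition 4.1 so that $\Sigma^+_{B_{q-1}}$ and $\Sigma^-_{B_q}$ play the role of the boundary sections; second, one must show that $\add\tilde M$-approximating sequences in $\mod\hat B$ descend to $\add M_A$-approximating sequences in $\mod A$, which relies on the fact that any morphism $F_\lambda(M')\to F_\lambda(X)$ decomposes, via the $\kk$-linear isomorphism $\bigoplus_{Fy=Fx}\hat B(M',{}^gX)\cong A(F_\lambda M',F_\lambda X)$, as a sum of morphisms in $\mod\hat B$, each of which lifts through a summand of $\tilde M$ by the local support-finiteness of $\hat B$.
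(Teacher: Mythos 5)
Your overall strategy (Galois covering, density of the pushdown via local support-finiteness, Auslander generators of the quasitube algebras $B_q$, descent of approximating sequences using the Hom-decomposition of the covering) is the paper's strategy, but there is a genuine gap at the step you yourself flag as the heart of the matter: the transjective components $\cat X_q$. Your candidate generator $\tilde M=\bigoplus_q M_q$ consists only of modules over the quasitube algebras $B_q$, yet each $\cat X_q$ contains finitely many modules lying strictly between $\Sigma^+_{q-1}$ and $\Sigma^-_q$ whose supports straddle both sides and which are therefore modules over neither $B_{q-1}$ nor $B_q$ --- for instance the projective(-injective) $\hat B$-modules that Theorem 5.3(e) guarantees in $\cat X_q$ (in the example of Section 6, $P_{(1,2)}$ is supported on $(0,2)$ and $(1,2)$ simultaneously), together with nearby modules such as radicals and quotients of these. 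Proposition 4.2 does not produce $\add(M_{q-1}\oplus M_q)$-approximating sequences for such modules: in that proposition they constitute the summand $L$ of the Auslander generator and are handled precisely by being included in it (``if $X\in\add L$, there is nothing to show''). Adding $A\oplus\D A$ downstairs recovers only the projective-injectives among them, so for the remaining middle modules your $\add\overline M_A$ gives no approximating sequence and Lemma 2.4 cannot be invoked. This is exactly why the paper's generator $M_q$ is taken to be the direct sum of \emph{all} modules in the finite region $\cat Y_q$ of $\cat X_q$ (between $\Sigma^+_{q-1}$ and $\Sigma^-_q$), together with the injective $B_q^-$-modules and the projective $\hat B$-modules lying in $\cat C_q$.

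Two further points. First, even for the modules of $\cat X_q$ that do lie over $B_{q-1}$ or $B_q$, your appeal to Proposition 4.2 presupposes that a finite-dimensional algebra $B_{q-1}*B_q$ satisfying (FG1)--(FG3) sits inside $\hat B$ and that approximations over it remain approximations in $\mod\hat B$; this matching with Theorem 5.3(d)--(g) is asserted but not verified, and the paper avoids it altogether: in its proof of the main theorem Section 4 is not used, the approximating sequences come directly from the quasitube construction (Proposition 3.3, Theorem 3.4) applied to $X\in(\cat X_i^-\setminus\Sigma_i^-)\vee\cat C_i\vee(\cat X_i^+\setminus\Sigma_i^+)$, and the descent uses the vanishing $\Hom_{\hat B}({}^{g^r}\!M,X)=0$ for $r\ge1$ plus the separation properties of Theorem 5.2 to factor the remaining components through $\add M_i$. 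Second, a repairable slip: $\tilde M=\bigoplus_{q\in\Ent}M_q$ is not an object of $\mod\hat B$ and its pushdown is not finite dimensional; one must take representatives of the finitely many $G$-orbits, i.e. $M=\bigoplus_{i=0}^{m-1}M_i$ with $g(\cat X_q)=\cat X_{q+m}$, as the paper does.
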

\begin{proof}
   Because $A$ is representation-infinite, it suffices to prove that $\repdim A\le 3$.
   Let $B$ be a tilted algebra of euclidean type $\vec{\Delta}$ and $G$ be an infinite cyclic admissible group of automorphisms of $\hat B$
   such that $A=\hat B/G$. Because of theorem 5.1, $G$ is generated by a strictly positive automorphism $g$ of $\hat B$ and, because of theorem 5.2,
   $\Gamma(\mod \hat B)$ admits a canonical decomposition
    \[ \Gamma(\mod \hat B)=\bigvee_{q\in\Ent}(\cat X_q\vee \cat C_q).\]
   Furthermore, for each $q\in\Ent$, we have algebras $B_q^-$, $B_q$ and $B_q^+$ which satisfy the conditions of theorem 5.3.

   Because $G$ also acts on the translation quiver $\Gamma(\mod \hat B)$, there exists $m>0$ such that
   $g(\cal X_q)=\cal X_{q+m}$ and $g(\cal C_q)=\cal C_{q+m}$ for each $q\in\Ent$.
   Then it follows from the definitions of $B^-_q$, $B_q$, $B_q^+$ that we also have
      \[ g(B_q^-)=B_{q+m}^-,\quad g(B_q)=B_{q+m}\quad\text{and } g(B_q^+)=B_{q+m}^+\]
   for each $q\in\Ent$.

   Because of theorem 5.3, we may chose in $\cat P^{B_q}=\cat P^{B_q^-}$ an euclidean right section $\Sigma^-_q$
   of type $\vec{\Delta}$ such that the full translation subquiver $\cat X_q^-$ of $\cat P^{B_q}$ given by all successors of $\Sigma_q^-$ in $\cat P^{B_q}$
   consists of modules having nonzero restrictions to the tame concealed full convex subcategory $C_q$,  and is a full translation subquiver of $\cat X_q$ closed under successors.

   Similarly, we may chose in $\cat Q^{B_q}=\cat Q^{B_{q}^+}$ an euclidean left section $\Sigma^+_q$
   of type $\vec{\Delta}$ such that the full translation subquiver $\cat X_q^+$ of $\cat Q^{B_q}$ given by all predecessors of $\Sigma_q^+$ in $\cat Q^{B_q}$
   consists of modules having nonzero restrictions to $C_q$,  and is a full translation subquiver of $\cat X_{q+1}$ closed under predecessors.

   We may assume that $g\left(\Sigma_q^-\right)=\Sigma_{q+m}^-$ and $g\left(\Sigma_q^+\right)=\Sigma^+_{q+m}$.
   Consequently, $g(\cat X_q^-)=\cat X^-_{q+m}$ and $g(\cat X_q^-)=\cat X^+_{q+m}$ for each $q\in\Ent$.

   For a given $q\in\Ent$, denote by $\cat Y_q$ the finite translation subquiver of $\cat{X}_q$ consisting of all modules which are successors of $\Sigma_{q-1}^+$
   and predecessors of $\Sigma_q^-$.
   Observe that every projective module of $\cat{X}_q$ lies in $\cat Y_q$.
   Moreover, we have $g(\cat Y_q)=\cat Y_{q+m}$ for any $q$.

   Now, for each $q$, let $M_q$ denote the direct sum of all modules in $\cat Y_q$, all injective $B_q^-$-modules lying in $\cat C_q$ and all projective $\hat B$-modules lying in $\cat C_q$.
   Then, clearly ${^g}\!M_q=M_{q+m}$ for any $q\in\Ent$.

   Finally, we set $M=\bigoplus_{i=0}^{m-1}M_i$.

   Let $F_\lambda:\mod \hat B\to \mod A$ be the pushdown functor associated to the Galois covering $F:\hat B\to\hat B/G=A$.
   We shall prove that $F_\lambda(M)$ is an Auslander generator for $A$.

   First, note that $A$ is a direct summand of $F_\lambda(M)$. Indeed, any indecomposable projective $A$-module is of the form $F_\lambda(P)$, for some
   indecomposable projective $\hat B$-module $P$.
   By definition of $M$, there exists $r\in\Ent$ such that $P$ is a direct summand of ${^{g^r}}\!M$.
   But then $F_\lambda(P)$ is a direct summand of $F_\lambda({^{g^r}\!M})=F_\lambda (M)$.
   We now prove that $\gldim \End M\le 3$, which will complete the proof.

   Let $Z$ be an indecomposable $A$-module which is not a direct summand of $F_\lambda(M)$.
   Because the pushdown functor is dense, there exists $i$ such that $0\le i<m$ and an indecomposable module $X\in(\cat X_i^-\setminus \Sigma_i^-)\vee\cat C_i\vee(\cat X_i^+\setminus\Sigma_i^+)$ such that $Z=F_\lambda(X)$.
   Moreover, if $X\in\cat C_i$, then $X$ is neither a projective $B_i$-module, nor an injective $B_i^-$-module.
   Because of theorem 3.5, there exists an $\add M_i$-minimal approximating sequence
   \[ \xymatrix@1{0\ar[r]&U\ar[r]^u&V\ar[r]^v&X\ar[r]&0}\]
   in $\mod\hat B$. Applying the exact functor $F_\lambda$ yields an exact sequence
   \[ \xymatrix@1{0\ar[r]&F_\lambda(U)\ar[r]^{F_\lambda(u)}&F_\lambda(V)\ar[r]^{F_\lambda(v)}&F_\lambda(X)\ar[r]&0}\]
   with $F_\lambda(U)$, $F_\lambda(V)\in\add F_\lambda(M)$.
   We recall also that $F_\lambda(X)=Z$.
   We claim that $F_\lambda(v):F_\lambda(V)\to Z$ is an $\add F_\lambda(M)$-approximation.
   Let $h:F_\lambda(M)\to F_\lambda(X)=Z$ be a nonzero morphism.
   The pushdown functor $F_\lambda:\mod \hat B\to \mod A$ is a Galois covering of module categories.
   In particular, it induces a vector space isomorphism
      \[\Hom_A(F_\lambda(M),F_\lambda(X))\cong \bigoplus_{r\in\Ent}\Hom_{\hat B}({^{g^r}}\!M,X).\]
     Therefore, for each $r\in\Ent$, there exists a morphism $f_r:{^{g^r}}\!M\to X$, all but finitely
     many of the $f_r$ being zero, such that $h=\sum_{r\in\Ent}F_\lambda(f_r)$.

     We claim that, for any $r\ge 1$, we have $\Hom_{\hat B}({^{g^r}}\!M,X)=0$.
     Indeed, $X\in\cat X_i\vee\cat C_i\vee\cat X_{i+1}$ for some $i$ with $0\le i<m$.
     On the other hand, for $r\ge 1$, the module ${^{g^r}}\!M$ is a direct sum of modules lying in $\bigvee_{j=0}^{m-1}(\cat X_{j+mr}\vee\cat C_{j+mr})$.
     This establishes our claim.

     Let now $f_r:{^{g^r}}\!M\to X$ be a nonzero morphism in $\mod \hat B$ for some $r\ge 0$.
     Applying theorem 5.2, we conclude that $f_r\,$ factors through a module in $\add M_i$.
     Because $v$ is an $\add M_i$-approximation, there exists a morphism $w_t:{^{g^r}}\!M\to V$ in $\mod \hat B$ such that $f_r=vw_r$.
     But then $F_\lambda(f_r)=F_\lambda(v)F_\lambda(w_r)$ with $F_\lambda(w_r):F_\lambda(M)\to F_\lambda(V)$ because
     $F_\lambda({^{g^r}}\!M)=F_\lambda(M)$.
     Summing up, there exists a morphism $w:F_\lambda(M)\to F_\lambda(V)$ such that $h=F_\lambda(v)w$.
     This concludes the proof.
\end{proof}

\subsection{Example}
    Let $B$ be the algebra given by the quiver $Q$
    \[\xymatrix@R=1pc{
        8 \ar[rdd]_{\varrho}        & 7 \ar[dd]^{\delta}
                                    &                       & 6 \ar[ld]_{\gamma}
                                                                \ar[dd]^{\xi}
                                                                \ar[rdd]^{\eta}     \\
                                    &                       & 5 \ar[ld]_{\beta}      \\
                                    & 2 \ar[rd]^{\alpha}    & 				&3   & 4 \\
                                    &                       & 1                     }
    \]
    bound by the relations $\varrho \alpha = 0$ and $\delta \alpha = 0$.
    Then $B$ is a tilted algebra of euclidean type $\widetilde{\mathbb D}_7$, being the one-point coextension
    of the hereditary algebra $H = K \vec{\Delta}$ of type $\mathbb{D}_6$, given by the full subquiver
    $\vec{\Delta}$ of $Q$ formed by the points $2,3,4,5,6,7,8$,
    by the (uniserial) simple regular module
        \[   R = \begin{matrix} 6 \\ 5 \\2 \end{matrix}  \]
    lying on the mouth of the unique stable tube of $\Gamma_H$ of rank $4$.
    Then $1,2,3,4,5,6,7,8$ is a reflection sequence of sinks in $Q_B = Q$ such that
    \begin{itemize}
        \item $S_1^+B$ is the algebra given by the quiver $\sigma_1^+Q$ of the form
            \[ \xymatrix@R=1pc{
                                        &                           & 1' \ar[rd]^{\eps} \\
                    8 \ar[rdd]_{\varrho}& 7 \ar[dd]^{\delta}
                                        &   & 6 \ar[ld]_{\gamma}
                                                \ar[dd]^{\xi}
                                                \ar[rdd]^{\eta}     &                   \\
                                        &   & 5 \ar[ld]_{\beta}     &                   \\
                                        & 2 &							  & 3                     & 4 }
            \]
            bound by the relations  $\eps \xi = 0$ and $\eps \eta = 0$, which is a tilted algebra of type
            $\widetilde{\mathbb{D}}_7$, being the one-point extension  of $H$ by the simple regular module $R$;
        \item $S_4^+S_3^+S_2^+S_1^+B$ is the algebra given by
            the quiver $\sigma_4^+\sigma_3^+\sigma_2^+\sigma_1^+Q$ of the form
            \[ \xymatrix@R=1pc{
                    & 2' \ar[ldd]_{\theta}
                         \ar[dd]^{\lambda}
                         \ar[rd]^{\alpha'}
                                            &                   & 3' \ar[dd]^{\omega} & 4'\ar[ldd]^{\mu} \\
                    &                       & 1' \ar[rd]^{\eps} &                     &                  \\
                8   & 7                     &                   & 6 \ar[ld]_{\gamma}  &                  \\
                    &                       & 5                 &                     &                     }
            \]
            bound by the relations $\omega \gamma = 0$ and $\mu \gamma = 0$,
            which is a tilted algebra of type  $\widetilde{\mathbb{D}}_7$, and isomorphic to $B$;
        \item $S_5^+S_4^+S_3^+S_2^+S_1^+B$ is the algebra given by the
                quiver $\sigma_5^+\sigma_4^+\sigma_3^+\sigma_2^+\sigma_1^+Q$ of the form
            \[\xymatrix@R=1pc{
                    &                        & 5' \ar[ld]_{\beta'}  &  \\
	                & 2' \ar[ldd]_{\theta}
                	     \ar[dd]^{\lambda}
                	     \ar[rd]^{\alpha'}   &                      & 3' \ar[dd]^{\omega} & 4' \ar[ldd]^{\mu}\\
	                &                        & 1' \ar[rd]^{\eps}    &                                        \\
	            8   &   7                    &                      & 6                  &                   }
            \]
            bound by the relations $\beta' \theta = 0$ and $\beta' \lambda = 0$,
            which is a tilted algebra of type $\widetilde{\mathbb{D}}_7$, and isomorphic to $S_1^+B$;
        \item $S_8^+S_7^+S_6^+S_5^+S_4^+S_3^+S_2^+S_1^+B$ is the algebra given by the quiver
                $\sigma_8^+\sigma_7^+\sigma_6^+\sigma_5^+\sigma_4^+\sigma_3^+\sigma_2^+\sigma_1^+Q$  of the form
            \[ \xymatrix@R=1pc{
                8' \ar[rdd]_{\varrho'} & 7' \ar[dd]^{\delta'} &                     & 6' \ar[ld]_{\gamma'}
                                                                                         \ar[dd]^{\xi'}
                                                                                         \ar[rdd]^{\eta'} &    \\
	                                   &                      & 5' \ar[ld]_{\beta'} &                          \\
	                                   & 2' \ar[rd]^{\alpha'} &                     & 3'                  & 4' \\
	                                   &                      & 1'                  &                           }
            \]
            bound by the relations $\varrho' \alpha' = 0$ and $\delta' \alpha' = 0$, which is a tilted algebra
            of type $\widetilde{\mathbb{D}}_7$, and isomorphic to $B$.
    \end{itemize}

    The repetitive category $\widehat{B}$ of $B$ is given by the quiver
      \[ \xymatrix@R=.9pc@C=1pc{
            \save[] +<5.mm,0mm> *{\dotsd}
                    \ar@{-}[rd] \restore
                                    & \vdots\ar[d]      & \save[] +<-5.mm,0mm> *{\dotst}
                                                            \ar[ld] \restore
                                                                & \vdots \ar[d] & \save[] +<-10.mm,0mm> *{\dotsd}
                                                                                    \ar[d] \restore             \\
                                    & (m+2,2) \ar[ldd]_{\theta_{m+1}}
                                              \ar[dd]^{\lambda_{m+1}}
                                              \ar[rd]^{\alpha_{m+2}}
                                                        &       & (m+2,3) \ar[dd]^{\omega_{m+1}}
                                                                                & (m+2,4) \ar[ldd]^{\mu_{m+1}}  \\
                                    &                   & (m+2,1) \ar[rd]^{\eps_{m+1}}                          \\
            (m+1,8) \ar[rdd]_{\varrho_{m+1}}
                                    & (m+1,7) \ar[dd]^{\delta_{m+1}}
                                                        &       & (m+1,6) \ar[ld]_{\gamma_{m+1}}
                                                                          \ar[dd]^{\xi_{m+1}}
                                                                          \ar[rdd]^{\eta_{m+1}}                 \\
                                    &                   & (m+1,5) \ar[ld]_{\beta_{m+1}}                         \\
                                    & (m+1,2) \ar[ldd]_{\theta_m}
                                              \ar[dd]^{\lambda_m}
                                              \ar[rd]^{\alpha_{m+1}}
                                                        &       & (m+1,3) \ar[dd]^{\omega_m}
                                                                                & (m+1,4) \ar[ldd]^{\mu_m}      \\
                                    &                   & (m+1,1) \ar[rd]^{\eps_m}                              \\
            (m,8) \ar[rdd]_{\varrho_m}
                                    & (m,7) \ar[dd]^{\delta_m}
                                                        &       & (m,6) \ar[ld]_{\gamma_m}
                                                                        \ar[dd]^{\xi_m}
                                                                        \ar[rdd]^{\eta_m}                       \\
                                    &                   & (m,5) \ar[ld]_{\beta_m}                               \\
                                    & (m,2) \ar[ldd]_{\theta_{m-1}}
                                            \ar[dd]^{\lambda_{m-1}}
                                            \ar[rd]^{\alpha_m}
                                                        &       & (m,3) \ar[dd]^{\omega_{m-1}}
                                                                                & (m,4) \ar[ldd]^{\mu_{m-1}}    \\
                                    &                   & (m,1) \ar[rd]^{\eps_{m-1}}                            \\
            (m-1,8)                 & (m-1,7)           &       & (m-1,6)                                       \\
            \save[] +<+10.mm,0mm> *{\dotsd}
                \ar@{-}[u] \restore & \vdots \ar@{-}[u] & \save[] +<5.mm,0mm>
                                                            *{\rule[20pt]{0pt}{10pt}\dotst\rule[20pt]{0pt}{10pt}}
                                                            \ar@{-}[ur] \restore
                                                                & \vdots \ar@{-}[u]
                                                                                & \save[] +<-5.mm,0mm> *{\dotsd}
                                                                                    \ar@{-}[ul] \restore        }
        \]
    bound by the relations
    \begin{gather*}
    	\theta_m \varrho_m = \lambda_m \delta_m = \alpha_{m+1} \eps_m \gamma_m \beta_m,
      \gamma_{m+1} \beta_{m+1} \alpha_{m+1} \eps_m = \xi_{m+1} \omega_m = \eta_{m+1} \mu_m, \,\,
      \varrho_m \alpha_m = 0,   \,\,    \varrho_m \lambda_{m-1} = 0,\,\,    \delta_m \alpha_m = 0,               \\
    	\delta_m \theta_{m-1} = 0,\,\,    \beta_m \theta_{m-1} = 0,   \,\,    \beta_m \lambda_{m-1} = 0,  \,\,
      \eps_m \xi_m = 0,  \,\,    \eps_m \eta_m = 0,   \,\,    \omega_m \gamma_m = 0,     \,\,
      \omega_m \eta_m = 0,      \,\,    \mu_m \gamma_m = 0,         \,\,    \mu_m \xi_m = 0,
    \end{gather*}
    for all $m \in \mathbb{Z}$.
    We identify the algebra $B$ with the full subcategory of $\widehat{B}$ given by the objects $(0,i)$,
    $i \in \set{1,2,3,4,5,6,7,8}$.

    Let $\varphi : \widehat{B} \to \widehat{B}$ be the automorphism of the category $\widehat{B}$ given by
    \begin{align*}
    	\varphi\big((m,1)\big) & = (m,5),     & \varphi\big((m,2)\big) & = (m,6),      & \varphi\big((m,3)\big) & =(m,7),        & \varphi\big((m,4)\big) & = (m,8),         \\
    	\varphi\big((m,5)\big) & =(m+1,1),    & \varphi\big((m,6)\big) & = (m+1,2),    & \varphi\big((m,7)\big) & =(m+1,3),      & \varphi\big((m,8)\big) & = (m+1,4),       \\
    	\varphi(\alpha_m)      & = \gamma_m,  & \varphi(\beta_m)       & = \eps_m,     & \varphi(\gamma_m)      & =\alpha_{m+1}, & \varphi(\xi_m)         & = \lambda_m,     \\
    	\varphi(\eta_m)        & = \theta_m,  & \varphi(\delta_m)      & = \omega_m,   & \varphi(\varrho_m)     & = \mu_m,       & \varphi(\lambda_m)     & = \xi_{m+1},     \\
    	\varphi(\theta_m)      & = \eta_{m+1} & \varphi(\eps_m)        & =\beta_{m+1}, & \varphi(\omega_m)      & =\delta_{m+1}, & \varphi(\mu_m)         & = \varrho_{m+1},
    \end{align*}
    for all $m \in \mathbb{Z}$.
    Observe that $\varphi^2 = \nu_{\widehat{B}}$.

    The orbit algebra $A_1 = \widehat{B}/(\varphi)$ is the algebra given by the quiver
    \[\xymatrix@R=1pc{
            && 3 \ar@<.5ex>^{\delta}[ld]\\
            5 \ar@<.5ex>^{\beta}[r]
              & 2 \ar@<.5ex>^{\alpha}[l] \ar@<.5ex>^{\xi}[ru] \ar@<.5ex>^{\eta}[rd] \\
            && 4 \ar@<.5ex>^{\varrho}[lu]}
    \]
    bound by the relations
    \[
      \alpha \beta \alpha \beta = \xi \delta = \eta \varrho, \,\,
      \varrho \alpha = 0,    \,\,     \delta \alpha = 0,     \,\,     \beta \xi = 0,     \,\,
      \beta \eta = 0,        \,\,     \delta \eta = 0,       \,\,     \varrho \xi = 0.      \]
    We note that $A_1$ is a symmetric one-parametric algebra.

    The orbit algebra
    $A_2 = \widehat{B}/(\varphi^2) = \widehat{B}/(\nu_{\widehat{B}}) = \operatorname{T}(B)$
    is the algebra given by the quiver
    \[
        \xymatrix@R=1pc{
            7 \ar@<.5ex>^{\delta}[rd] && 1 \ar^{\eps}[rd] && 3 \ar@<.5ex>^{\omega}[ld]\\
            & 2 \ar^{\alpha}[ru] \ar@<.5ex>^{\lambda}[lu] \ar@<.5ex>^{\theta}[ld]
              && 6 \ar^{\gamma}[ld] \ar@<.5ex>^{\xi}[ru] \ar@<.5ex>^{\eta}[rd] \\
            8 \ar@<.5ex>^{\varrho}[ru] && 5 \ar^{\beta}[lu] && 4 \ar@<.5ex>^{\mu}[lu]
        }
    \]
    bound by the relations
    \begin{gather*}
      \theta \varrho = \lambda \delta = \alpha \eps \gamma \beta,     \,\,
      \xi \omega = \eta \mu = \gamma \beta \alpha \eps,   \,\,
      \varrho \alpha = 0,    \,\,  \varrho \lambda = 0, \,\,    \delta \alpha = 0,  \,\,    \delta \theta = 0,  \\
      \beta \theta = 0,      \,\,   \beta \lambda = 0,  \,\,    \eps \xi = 0, \,\,    \eps \eta = 0, \,\,
      \omega \gamma = 0,     \,\,     \omega \eta = 0,   \,\,     \mu \gamma = 0,    \,\,     \mu \xi = 0.
    \end{gather*}
    We note that $A_2 = \operatorname{T}(B)$ is a $2$-parametric symmetric algebra.

    The orbit algebra   $A_3 = \widehat{B}/(\varphi^3)$ is the algebra given by the quiver
    \[\begin{xy}
            0;/r.3pc/:
            (0,0)*+{6}="6" ;
            (0,17.32)*+{5'}="5'" ;
            (-10,17.32)*+{2'}="2'" ;
            (10,17.32)*+{2}="2" ;
            (0,25.98)*+{7}="7" ;
            (0,34.64)*+{8}="8" ;
            (-5,8.66)*+{1'}="1'" ;
            (5,8.66)*+{5}="5" ;
            (-12.5,4.33)*+{3}="3" ;
            (12.5,4.33)*+{3'}="3'" ;
            (-20,0)*+{4}="4" ;
            (20,0)*+{4'}="4'" ;
            \ar @{->}^(.65){\gamma} "6";"5"
            \ar @{->}^(.35){\beta} "5";"2"
            \ar @{->}^(.65){\alpha} "2";"5'"
            \ar @{->}^(.35){\beta'} "5'";"2'"
            \ar @{->}^(.65){\alpha'} "2'";"1'"
            \ar @{->}^(.35){\eps} "1'";"6"
            \ar @/^/@{->}_{\xi} "6";"3"
            \ar @/^/@{->}_{\delta'} "3";"2'"
            \ar @/^/@{->}_{\lambda} "2'";"7"
            \ar @/^/@{->}_{\delta} "7";"2"
            \ar @/^/@{->}_{\xi'} "2";"3'"
            \ar @/^/@{->}_{\omega} "3'";"6"
            \ar @/^/@{->}^{\eta} "6";"4"
            \ar @/^/@{->}^{\varrho'} "4";"2'"
            \ar @/^/@{->}^{\theta} "2'";"8"
            \ar @/^/@{->}^{\varrho} "8";"2"
            \ar @/^/@{->}^{\eta'} "2";"4'"
            \ar @/^/@{->}^{\mu} "4'";"6"
      \end{xy} \]
    bound by the relations
    \begin{gather*}
      \theta\varrho = \lambda \delta = \alpha' \eps \gamma \beta,    \quad
      \alpha\beta' \alpha' \eps = \xi' \omega = \eta' \mu,           \quad
      \gamma\beta \alpha \beta' = \xi \delta' = \eta \delta',                                               \\
      \varrho\alpha = 0,    \quad \delta \alpha = 0,\quad \varrho \xi' = 0,     \quad \delta \eta' = 0,
                            \quad \beta\xi' = 0,    \quad \beta \eta' = 0,                                  \\
      \eps\xi = 0,          \quad \eps \eta = 0,    \quad \omega \gamma = 0,    \quad \omega \eta = 0,
                            \quad \mu\gamma = 0,    \quad \mu \xi = 0,          \quad \varrho' \alpha' = 0, \\
      \delta'\alpha'= 0,    \quad \varrho'\lambda = 0,  \quad \delta' \theta = 0,   \quad \beta' \theta = 0,
                            \quad \beta' \lambda = 0.
    \end{gather*}

    We note that $A_3$ is a $3$-parametric selfinjective algebra of euclidean type, which is not weakly symmetric.

    More generally, for any positive integer $r \geq 3$, the orbit algebra $A_r = \widehat{B}/(\varphi^r)$ is
    an $r$-parametric selfinjective algebra of euclidean type, which is not weakly symmetric.

    We mention that every domestic quasitube convex subcategory of $\widehat{B}$ is the algebra
    $\varphi^r(T_1^+ B)$, for some $r \in \mathbb{Z}$, and $T_1^+ B$ is given by the quiver
    \[\xymatrix@R=1pc{
                                 &                      & 1' \ar[rd]^{\eps}                         \\
            8 \ar[rdd]_{\varrho} & 7 \ar[dd]^{\delta}   &                   & 6 \ar[ld]_{\gamma}
                                                                                \ar[dd]^{\xi}
                                                                                \ar[rdd]^{\eta}     \\
                                 &                      & 5 \ar[ld]_{\beta}                         \\
                                 & 2 \ar[rd]^{\alpha}   &                   & 3                 & 4 \\
                                 &                      & 1                                         }
    \]
    bound by the relations $\varrho \alpha = 0$, $\delta \alpha = 0$, $\eps \xi = 0$
    and $\eps \eta = 0$.
    The algebras $\varphi^r(T_1^+ B)$, $r \in \Ent$, are the support algebras of the families
    $\mathscr{C}_q = (\mathscr{C}_{q,\lambda})_{\lambda \in \mathbb{P}_1(K)}$, $q \in \mathbb{Z}$, of quasitubes of
    $\Gamma(\mod \widehat{B})$, described in theorem~5.2.

    We also note that the support algebras of the acyclic components $\mathscr{X}_q$, $q \in \mathbb{Z}$,
    of $\Gamma(\mod \widehat{B})$, described in theorem~5.2,
    are of the forms $\varphi^r(T_4^+T_3^+T_2^+S_1^+ B)$, $r \in \mathbb{Z}$,
    where $T_4^+T_3^+T_2^+S_1^+ B$ is given by the quiver
    \[ \xymatrix@R=1pc{
                    & 2' \ar[ldd]_{\theta}
                         \ar[dd]^{\lambda}
                         \ar[rd]^{\alpha'}  &                   & 3' \ar[dd]^{\omega}   & 4' \ar[ldd]^{\mu} \\
                    &                       & 1' \ar[rd]^{\eps}                                             \\
        8 \ar[rdd]_{\varrho}
                    & 7 \ar[dd]^{\delta}    &                   & 6 \ar[ld]_{\gamma}
                                                                    \ar[dd]^{\xi}
                                                                    \ar[rdd]^{\eta}                         \\
                    &                       & 5 \ar[ld]_{\beta}                                             \\
                    & 2                     &                   & 3                     & 4                 }
    \]
    bound by the relations $\theta\varrho = \lambda\delta = \alpha'\eps\gamma\beta$,
    $\eps\xi = 0$, $\eps\eta = 0$, $\omega\gamma = 0$, $\omega\eta = 0$, $\mu\gamma = 0$ and $\mu\xi = 0$.
    Moreover, $T_4^+T_3^+T_2^+S_1^+ B$ is a gluing of the tilted algebras $S_1^+ B$ and $S_4^+S_3^+S_2^+S_1^+ B$.

\subsection*{Acknowledgement}
The first author was partially supported by a research fellowship within the project "Enhancing Educational Potential of Nicolaus Copernicus University
in the Disciplines of Mathematical and Natural Sciences" (project no. POKL.04.01.01-00-081/10).
He was also partially supported by the NSERC of Canada, the FRQ-NT of Québec and the Université de Sherbrooke.

The second and third authors were supported by the research grant DEC-2011/02/A/ST1/00216 of the Polish National Science Center.

Research on this paper was carried out while the first and the third author were visiting the second.
They are grateful to him and to the whole representation theory group in Toru\'n for their kind hospitality during their stay.

\section*{References}

\end{document}